\newtheorem{lemma}{Lemma}
\newtheorem{theorem}{Theorem}
\newtheorem{assumption}{Assumption}
\newtheorem{corollary}{Corollary}
\newtheorem{definition}{Definition}
\newtheorem{example}{Example}
\newcommand{\bs}[1]{\boldsymbol{#1}}
\newcommand{\norm}[1]{\left\|#1\right\|}
\newcommand{\va}{\mathbf{a}}
\newcommand{\vx}{\mathbf{x}}
\newcommand{\vvx}{\tilde{\mathbf{x}}}
\newcommand{\vvz}{\tilde{\mathbf{z}}}
\newcommand{\vvy}{\tilde{\mathbf{y}}}
\newcommand{\vve}{\tilde{\mathbf{e}}}
\newcommand{\vy}{\mathbf{y}}
\newcommand{\vr}{\mathbf{r}}
\newcommand{\ve}{\mathbf{e}}
\newcommand{\vlambda}{\boldsymbol{\lambda}}
\newcommand{\vdelta}{\boldsymbol{\delta}}
\newcommand{\vmu}{\bs{\mu}}
\newcommand{\vphi}{\bs{\phi}}
\newcommand{\ts}{\mathsf{u}}
\newcommand{\mA}{\mathbf{A}}
\newcommand{\mD}{\mathbf{D}}
\newcommand{\mH}{\mathbf{H}}
\newcommand{\mM}{\mathbf{M}}
\newcommand{\mL}{\mathbf{L}}
\newcommand{\mI}{\mathbf{I}}
\newcommand{\mW}{\mathbf{W}}
\newcommand{\RR}{\mathbb{R}}
\newcommand{\tr}{\mathsf{T}}
\DeclareMathOperator*{\diag}{diag}
\DeclareMathOperator{\sign}{sign}
\colorlet{shadecolor}{blue!20}
\title{Decentralized ADMM with Compressed and Event-Triggered Communication}
\name{Zhen Zhang,  Shaofu Yang, and Wenying Xu\thanks{Corresponding author: Shaofu Yang (sfyang@seu.edu.cn).}}
\address{Southeast University, Nanjing, China}
\begin{document}
	%
	\maketitle
	\begin{abstract}
This paper focuses on the decentralized optimization problem, where  agents in a network cooperate to minimize the sum of their  local objective  functions by information exchange and local computation. 
Based on  alternating direction method of multipliers (ADMM), we propose CC-DQM, a  communication-efficient decentralized second-order optimization algorithm that combines  compressed communication with  event-triggered communication.  Specifically,
agents are allowed to transmit the compressed message  only when the current primal variables have changed greatly compared to its  last estimate.
		Moreover, to relieve the computation cost, the update of Hessian is scheduled by the trigger condition.	
To   maintain exact linear convergence under compression,  we compress  the difference between the information to be transmitted and  its estimate  by a general contractive compressor.
	 Theoretical analysis shows  that CC-DQM  can still achieve an exact linear convergence, despite   the existence of compression error and  intermittent communication, if the objective functions are strongly convex and smooth. Finally, we validate the performance of CC-DQM by numerical experiments.
	\end{abstract}
	\begin{keywords}
		decentralized optimization, ADMM, efficient communication, second-order algorithms.
	\end{keywords}
	\section{Introduction}
	\label{sec:intro}
	In recent years, the decentralized optimization problem has attracted  increasing attention due to its  extensive application in 
	  multi-robots network \cite{TSP-Cao-Yu-etc2013}, smart grids \cite{TSG-Liu-Shi-etc2018}, large-scale machine learning \cite{TIT-Predd-Kulkarni-etc2009}, wireless sensor networks \cite{TSP-Khan-Kar-etc2010},  etc.
	A large number of first-order algorithms including \cite{TAC-Nedic-Ozdaglar-2009,SIAM-Shi-Ling-2015,TCNS-Qu-Li-2018} have been proposed for decentralized optimization problems. Compared with the first-order algorithms which just utilize the gradient of the objective function, the second-order algorithm leveraging the extra  Hessian information can accelerate the convergence.
	Recently,  several decentralized  second-order algorithms are proposed, see \cite{TSP-Eisen-Mokhtari-Ribeiro2019,TSP-Mokhtari-Ling-2017,TSIPN-Mokhtari-Shi-2016,TAC-Mansoori-Wei-2020,TSP-Mokhtari-Shi-etc2016}, to name a few.


Decentralized optimization relies on communication between agents.
In most existing decentralized  second-order algorithms including \cite{TSP-Eisen-Mokhtari-Ribeiro2019,TSP-Mokhtari-Ling-2017,TSIPN-Mokhtari-Shi-2016,TAC-Mansoori-Wei-2020,TSP-Mokhtari-Shi-etc2016},
agents need to transmit accurate updates at every iteration, which can cause high communication costs due to the large payloads and frequent communication. 
High communication costs is undesirable for the  scenarios with limited bandwidth and power constraints.
Moreover, in many second-order algorithms including \cite{TSP-Eisen-Mokhtari-Ribeiro2019,TSP-Mokhtari-Ling-2017,TSIPN-Mokhtari-Shi-2016,TAC-Mansoori-Wei-2020},  to approximate the Newton direction, agents are required to implement several rounds of inner-loop
where extra communication is needed. Hence it is very necessary to improve the communication efficiency of the second-order algorithm.

 To relieve the communication cost,  a popular method is to compress the exchanged message so that fewer bits are transmitted per communication round.
	In decentralized optimization, many algorithms with compressed communication  \cite{ICASSP-Zhu-Hong-etc2016,TWC-Issaid-Chaouki-ect2021,TCOMM-Elgabli-Park-etc2021,TAC-Doan-Maguluri-etc2021,Arxiv-Liu-Li-etc2020,Arxiv-Xiong-Wu-etc2021} have been proposed, among which \cite{ICASSP-Zhu-Hong-etc2016,TWC-Issaid-Chaouki-ect2021,TCOMM-Elgabli-Park-etc2021} belong to
	ADMM-based quantization algorithms where solving subproblem at every iteration is required and
	\cite{TAC-Doan-Maguluri-etc2021,Arxiv-Liu-Li-etc2020,Arxiv-Xiong-Wu-etc2021} belong to first-order communication-compressed methods.
	Based on DGD \cite{TAC-Nedic-Ozdaglar-2009}, the work of \cite{TAC-Doan-Maguluri-etc2021} proposes a well-designed quantization scheme and achieve the exact convergence. 
	In \cite{Arxiv-Xiong-Wu-etc2021}, a gradient tracking algorithm with compressed communication  is introduced, which can converge exactly at a linear convergence. Based on the first-order algorithm NIDS\cite{TSP-Li-Shi-etc2019}, the authors in \cite{Arxiv-Liu-Li-etc2020}  propose a compressed communication
	algorithm which 
	can also achieve linear exact convergence. Despite the progress, few decentralized
	second-order algorithms  with compressed communication are reported.
	
An alternative method  to alleviate the communication cost is intermittent communication which can  reduce total communication rounds.
The	event-triggered communication scheme is  a  very appealing method in reducing communication rounds. 
It can also be regarded as the celebrated communication-censoring mechanism\cite{TAES-Rago-Willett-etc1996,TSP-Liu-Xu-etc2019}
whose main idea is only to transmit informative message.
Recently, many decentralized algorithms with event-triggered communication are reported, see \cite{TSP-Liu-Xu-etc2019,Automatica-Chen-Ren-2016,TNNLS-Zhang-Yang-etc2022}, to name a few.
Moreover, there are some  works,  including \cite{TWC-Issaid-Chaouki-ect2021,TNNLS-Liu-Wu-etc2021,TAC-Singh-Data-etc2022}, that combine compressed communication with event-triggered communication.  

	In this paper, we aim  at developing a decentralized
	communication-efficient  second-order algorithm with a  linear convergence rate to the exact solution.
Since the communication cost is determined by the total communication rounds and the bits per communication round,
	we improve communication efficiency from these two aspects.
	Our main contributions are as follows.
	\begin{itemize}
		\item Based on ADMM,  We develop a communication-efficient second-order algorithm by combining communication compression with event-triggered communication termed  CC-DQM.  Compared with our prior work C-DQM \cite{TNNLS-Zhang-Yang-etc2022}, an event-triggered communication algorithm,
		CC-DQM can save the transmitted bits per communication round.
		Compared with the existing quantized ADMM \cite{ICASSP-Zhu-Hong-etc2016,TNNLS-Liu-Wu-etc2021}, CC-DQM can achieve an exact convergence due to the
		  implementation of a totally different contractive compressor. 
		  Compared with CQ-GGADMM \cite{TWC-Issaid-Chaouki-ect2021}, CC-DQM can be applicable  to a 
		  general  contractive compressor, not just a specific quantizer. Moreover,  CQ-GGADMM can only apply to bipartite graphs while C-DQM can apply to 
		  non-bipartite graphs.
		\item We theoretically prove that CC-DQM can achieve an exact linear convergence if the objective functions are strongly convex and smooth. Numerical experiments demonstrate the effectiveness and efficacy of the proposed algorithm.
	\end{itemize}
\section{Problem Setup}\label{sec:preliminaries}
Consider  $n$ agents connected through a communication network cooperatively solve the following consensus optimization problem 
\begin{equation}
	\min_{\vx\in \RR^d} \sum_{i=1}^{n} f_{i}(\vx),  \label{eq:optimization_problem}
\end{equation}
where $\vx$ refers to the decision variable and   
$f_i: \RR^d \to \RR$ is the local objective function owned  by agent $i$.
Denote the communication graph as
$\mathcal{G} = \{\mathcal{V}, \mathcal{E}\}$, where $\mathcal{V} = \{1,2, \cdots, n\}$ is the set of agents and $\mathcal{E} \subset \mathcal{V} \times \mathcal{V}$ is the set of edges. $(j,i)\in \mathcal{E}$ implies that message can be transmitted from  agent $j$ to agent $i$. Moreover,  there  does not exist self-loop in $\mathcal{G}$, i.e., $(i,i)\notin \mathcal{E}$. 
We use 
 $\mathcal{N}_i = \{j\;|\;(j,i)\in \mathcal{E}\}$ to represent  the set of neighbors of agent $i$ and $d_i=|\mathcal{N}_i|$ to represent the degree of agent $i$. 
 The degree matrix is represented   by $\mD =\diag\{d_1, d_2, \cdots, d_n\}$ and denote the adjacent matrix of $\mathcal{G}$  as $\mW$, where $w_{ij}=1$ if $(j,i)\in \mathcal{E}$ and $w_{ij}=0$ otherwise. 
 The signed Laplacian matrix is defined as $\mL = \mD - \mW$ and the  unsigned Laplacian matrix 
is defined as $\mL_{\ts} = \mD + \mW$.
 Denote the eigenvalues of $\mL$ with ascending order as $\lambda_1 \leq \lambda_2 \leq \cdots \leq \lambda_n$. Similarly, 
 we use  $\hat{\lambda}_1 \leq \hat{\lambda}_2 \leq \cdots \leq \hat{\lambda}_n$ to represent the eigenvalues of $\mL_{\ts}$.
The Euclidean norm of vector $\vx$ is denoted by $\|\vx\|$.
\section{Algorithm Development}\label{sec:algorithm}
In this section, we develop a communication-efficient decentralized second-order method.
To solve problem \eqref{eq:optimization_problem}, based on ADMM, the authors in \cite{TSP-Mokhtari-Shi-etc2016} proposed an  elegant  second-order method DQM, where the update of agent $i$ is as follows:
\begin{subequations}
	\begin{align}
		\vx_{i,k+1} = {}& \vx_{i,k}-\Big( 2 c d_i \mI+ \nabla^2 f_i(\vx_{i,k})  \Big)^{-1}\bigg(  \nabla f_{i}(\vx_{i,k})    \notag \\
		& + c \sum_{j \in \mathcal{N}_{i}} (\vx_{i,k} - \vx_{j,k}) + \vphi_{i,k} \bigg)  \\
		\vphi_{i,k+1}={}& \vphi_{i,k} + c \sum_{j \in \mathcal{N}_{i}} (\vx_{i,k+1} - \vx_{j,k+1}),
	\end{align}
\end{subequations}
{where the penalty  parameter $c$ is a positive constant.}
DQM is an ADMM-type algorithm, which reduces the computation burden of DADMM \cite{TSP-Shi-Ling-etc2014} by approximating the objective function  quadratically. 
In DQM, information is required to be transmitted at every iteration, which is undesirable for  settings where the communication source is limited. To relieve the communication burden of DQM, in our prior work \cite{TNNLS-Zhang-Yang-etc2022}, a communication-censored mechanism is leveraged to reduce the communication round. 
In order to further reduce communication costs, we not only schedule the communication instants by communication-censored mechanism but also compress the exchanged information. The resulting algorithm is termed  communication-censored and communication-compressed DQM, abbreviated as CC-DQM. 

\textbf{Communication compression}.
The compression scheme we implement is a common $\delta$-contractive compressor, which is defined as follows:
\begin{definition}\label{def-compressor}
The compressor  $\mathcal{C}$ $:$ $\RR^d\rightarrow \RR^d$ is called  $\delta$-contractive compressor if it  satisfies  
	\begin{align}
		\mathbb{E}\big(\|\vx-\mathcal{C}(\vx)\|^2\big) \leq \delta \|\vx\|^2 \quad \forall x \in \RR^d,
	\end{align}   
	where  $0\leq\delta< 1$.
\end{definition}
Many important sparsifiers and quantizers satisfy definition  \ref{def-compressor}.
Next, We  introduce some  contractive compression operators.
\begin{example}\label{example-biased-quantize}
	\cite{NIPS-Sun-Chen-etc2019}
$\mathcal{C}(\vx)=q(\vx)\tau-\|\vx\|_{\infty} \mathbf{1}_d,$
where $[q(\vx)]_i=\lfloor\frac{[\vx]_i+\|\vx\|_{\infty}}{\tau}+\frac{1}{2}\rfloor, \tau=2\|\vx\|_{\infty}/(2^b-1).$	 
\end{example}
\begin{example}\label{example-unbiased-quantize}
\cite{Arxiv-Liu-Li-etc2020}
Denote $\sign{(\vx)}$ and $|\vx|$ as the elementwise sign of $\vx$ and the  elementwise     absolute value of $\vx$, then the compressor is defined as
$$\mathcal{C}(\vx)=\big(\|\vx\|_{\infty}\sign{(\vx)}2^{-(b-1)}\big)\cdot\lfloor\frac{2^{b-1}|\vx|}{\|\vx\|_{\infty}}+\mathbf{u}\rfloor,$$
where $\cdot$ stands for the  the Hadamard product and $\mathbf{u}$ is a random vector uniformly distributed in $[0,1]^d$. 
\end{example}
Example \ref{example-biased-quantize} is a deterministic  quantizer and $32+bd$ bits are required  to   quantize a vector with $d$ dimensions.
Example \ref{example-unbiased-quantize} is a stochastic quantizer and $32+(b+1)d$ bits are required to quantize a vector
with $d$ dimensions. 

{For any agent $i$, 
since it can not get $\vx_{j,k}$, the exact iterates  of  its neighbors $j$, to estimate $\vx_{j,k}$ and its iterates $\vx_{i,k}$, two state variables
  $\vy_{j,k}$ and $\vy_{i,k}$ are introduced respectively.}
It is worth noting what we compress is $\vx_{i,k+1}-\vy_{i,k}$, the difference between the decision variable $\vx_{i,k+1}$ and the state variable $\vy_{i,k}$.

\textbf{Communication-censored mechanism(Event-triggered communication mechanism).}
The key idea of this mechanism is that  communication
is allowed only when the difference between the current decision variable and the latest estimate is sufficiently large.
Specifically, if the innovation $\|\vx_{i,k+1}-\vy_{i,k}\|$ is greater than the threshold $\mu_k$, agent $i$ will compress $\vx_{i,k+1}-\vy_{i,k}$ and transmit it to the neighbors.   Otherwise, agent $i$ does not transmit any message. After receiving all  the information, agent $i$
updates  the state  variables $\vy_{j,k+1}$ with $j\in i\cup \mathcal{N}_i$.
Moreover, to reduce the computation cost resulting from calculating the inverse of $2cd_i\mI+\nabla f(\vx_{i,k})$, the update of Hessian is scheduled by th triggered condition. 
Specifically, for agent $i$,
if  communication is un-triggered at iteration $k$, then it  does not need to perform the matrix inversion step at iteration $k+1$.
The detailed procedure of our proposed algorithm is shown in Algorithm \ref{algo:PC-DQM}.

\begin{algorithm}[tp!]
	\caption{CC-DQM}\label{algo:PC-DQM}
	\begin{algorithmic}[1]
		\State {For any agent $i$, randomly	choose $\vx_{i,0}\in \RR^d$.  Let $\vphi_{i,0}=\mathbf{0}, \vy_{i,0}=\mathbf{0}$.}
		\For {$k = 0,1,2,\cdots$}
		\For{$i=1$ to $N$}
		\State Update  $\vx_{i,k+1}$ by eq. \eqref{eq:PC-DQM-x};	
		\If{ $\|\vx_{i,k+1}-\vy_{i,k}\|\geq\mu_k$}
		\State Compute $\mathcal{C}(\vx_{i,k+1}-\vy_{i,k})$;
		\State Transmit $\mathcal{C}(\vx_{i,k+1}-\vy_{i,k})$;
		\State Let $\vy_{i,k+1}=\mathcal{C}(\vx_{i,k+1}-\vy_{i,k})+\vy_{i,k}$.
		\Else
		\State Let $\vy_{i,k+1}=\vy_{i,k}$;
		\State Do not send any message.
		\EndIf
		\State Update  $\vphi_{i,k+1}$ by eq. \eqref{eq:PC-DQM-phi}.
		\EndFor
		\EndFor
	\end{algorithmic}
\end{algorithm}
According to the above discussion, we give the update of $\vx_{i}$ and $\vphi_i$, which are as follows:
\begin{subequations}\label{eq:PC-DQM}
	\begin{align}
		\vx_{i,k+1} = {}& \vx_{i,k}-\Big( 2 c d_i \mI+ \nabla^2 f_i(\vy_{i,k})  \Big)^{-1}\bigg(  \nabla f_{i}(\vx_{i,k})    \notag \\
		& + c \sum_{j \in \mathcal{N}_{i}} (\vy_{i,k} - \vy_{j,k}) + \vphi_{i,k} \bigg)  \label{eq:PC-DQM-x}\\
		\vphi_{i,k+1}={}& \vphi_{i,k} + c \sum_{j \in \mathcal{N}_{i}} (\vy_{i,k+1} - \vy_{j,k+1}). \label{eq:PC-DQM-phi}
	\end{align}
\end{subequations}
Compared with the quantized
	ADMM \cite{ICASSP-Zhu-Hong-etc2016,TNNLS-Liu-Wu-etc2021}, CC-DQM can converge exactly and enjoy a smaller computation cost since it does not 
	need to solve a subproblem at every iteration.
	Compared with the communication-censored ADMM \cite{TSP-Liu-Xu-etc2019,TNNLS-Zhang-Yang-etc2022,TSIPN-Li-Liu-etc2020}, CC-DQM can reduce  the transmitted bits per communication, thus 
	relieving the communication cost greatly.
	Moreover, as we will show later, compared with the  quantized first-order method \cite{Arxiv-Liu-Li-etc2020,TAC-Singh-Data-etc2022}, CC-DQM enjoys a faster convergence rate, thus achieving a smaller communication cost.
The work in \cite{Arxiv-Liu-Zhang-etc2022} proposed an elegant compressed second-order decentralized algorithm, which can achieve an asymptotic local super-linear convergence. Compared with  CC-DQM, it reduce the communication round by accelerating the convergence rate not by intermittent communication. Moreover, due to the exchange of Hessian and multi-step consensus, it may transmit more  bits per communication round.

\section{Convergence Results}\label{sec:convergence-analysis}
In this section, we will show the convergence properties of CC-DQM. 

\begin{assumption}\label{assump:f}
	The local objective function $f_i$ is $v_i$-strongly convex and its gradient is  $\ell_i$-Lipschitz continuous, i.e., $\forall x,x'\in \RR^d$, $\left\langle\nabla f_{i}(x')-\nabla f_{i}(x), x'-x\right\rangle \geq v_i\left\|x'-x\right\|^{2}$,
	and $\left\|\nabla f_i(x')-\nabla f_i(x)\right\| \leq \ell_i \|x'-x\|$.
\end{assumption}



\begin{assumption}[Communication Graph] \label{assump:graph}
	$\mathcal{G}$ is undirected, connected and $\mL_\ts$ is positive definite.
\end{assumption}
Assumption \ref{assump:f} is very common in decentralized optimization. Under Assumption \ref{assump:f}, we can know $f$ is $v$-strongly convex and $\ell$-smooth, with  ${v} = \min_{i}\{v_i\}$ and ${\ell}=\max_i\{\ell_i\}$.
Assumption \ref{assump:graph} implies that $\mL$ is semi-positive definite with a simple zero eigenvalue.  Note that a positive definite $\mL_{\ts}$ means $\mathcal{G}$ is non-bipartite.  
We first give the convergence result when the event-triggered communication is absent.
\begin{theorem}\label{theorem:concractive-compression}
	Under Assumptions \ref{assump:f} and \ref{assump:graph}, let $\mathcal{C}$ be  $\delta$-contractive compressor,
	 in CC-DQM,
	 if $\mu_k=0$, 
	$c$ and $\delta$ are chosen such that
	\begin{align}  
		\frac{\delta}{(1-\sqrt{\delta})^2}< \frac{G(\beta)}{3c\lambda_n+2c\beta\lambda_n+\frac{c\lambda_n^2}{\beta\lambda_2}},\label{eq:condition-compress}
	\end{align}
	with $ G(\beta)>0$, $\beta>\frac{\ell^2}{2c\lambda_2v}$,
	where 
	\begin{align*}
		G(\beta)={	\frac{c \hat{\lambda}_1}{2}-\frac{2c\beta\lambda_2\ell^2}{2c\beta\lambda_2v-\ell^2}-\frac{ (c^2 \hat{\lambda}_n^2 + 4\ell^2 )}{c\beta\lambda_2 }},
	\end{align*}
	then the sequence {$\mathbb{E}(\vvx_k)$} with	$\vvx_k=[\vx_{1,k},\dots,\vx_{n,k}]$ is convergent to the optimal solution
	 $\vx^*$ at a  linear rate $\mathcal{O}(\sigma^k)$ with $0<\sigma<1$.
\end{theorem}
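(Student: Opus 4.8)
The plan is to recast CC-DQM in stacked form, identify its fixed point together with the associated KKT conditions, and then exhibit a Lyapunov function whose one-step decrease is governed by a contraction factor $\sigma\in(0,1)$ exactly when the bound \eqref{eq:condition-compress} holds. Writing $\vvx_k=[\vx_{1,k};\dots;\vx_{n,k}]$ and $\vvy_k,\vphi_k$ analogously, and setting $F(\vvx)=\sum_i f_i(\vx_i)$ and $\mH_k=2c(\mD\otimes\mI)+\nabla^2F(\vvy_k)$ (block diagonal), equations \eqref{eq:PC-DQM-x}--\eqref{eq:PC-DQM-phi} read $\vvx_{k+1}=\vvx_k-\mH_k^{-1}\big(\nabla F(\vvx_k)+c(\mL\otimes\mI)\vvy_k+\vphi_k\big)$ and $\vphi_{k+1}=\vphi_k+c(\mL\otimes\mI)\vvy_{k+1}$. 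Since $\mu_k=0$, the state is always refreshed, $\vvy_{k+1}=\vvy_k+\mathcal{C}(\vvx_{k+1}-\vvy_k)$, so the compression error $\ve_k:=\vvx_{k+1}-\vvy_{k+1}$ satisfies, by Definition \ref{def-compressor} applied blockwise and summed over agents, $\mathbb{E}(\|\ve_k\|^2)\le\delta\|\vvx_{k+1}-\vvy_k\|^2$. First I would pin down the fixed point: at optimality $\vvx^*=\vvy^*=\mathbf 1\otimes x^*$ so that $(\mL\otimes\mI)\vvy^*=\mathbf 0$, and there is a dual $\vphi^*\in\mathrm{range}(\mL\otimes\mI)$ with $\nabla F(\vvx^*)+\vphi^*=\mathbf 0$. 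Because $\vphi_0=\mathbf 0$ and every dual increment lies in $\mathrm{range}(\mL\otimes\mI)$, the dual error $\vphi_k-\vphi^*$ stays in this subspace for all $k$, which is precisely what lets $\lambda_2,\hat\lambda_1,\hat\lambda_n$ (rather than the zero eigenvalue of $\mL$) enter the spectral estimates.

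Next I would derive the error recursions by subtracting the fixed-point identities, obtaining a coupled system in the primal error $\vvx_k-\vvx^*$, the dual error $\vphi_k-\vphi^*$, and the tracking error $\vvx_k-\vvy_k=\ve_{k-1}$. The decisive auxiliary estimate is a self-contraction for the compression error: writing $\vvx_{k+1}-\vvy_k=(\vvx_{k+1}-\vvx_k)+\ve_{k-1}$ and applying Young's inequality with the optimal split gives $\mathbb{E}(\|\ve_k\|^2)\le\sqrt\delta\,\|\ve_{k-1}\|^2+\tfrac{\delta}{1-\sqrt\delta}\,\|\vvx_{k+1}-\vvx_k\|^2$. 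Iterating this geometric recursion shows that the accumulated tracking error fed into the Lyapunov function is at most $\frac{\delta}{(1-\sqrt\delta)^2}$ times the primal increment energy $\sum_k\|\vvx_{k+1}-\vvx_k\|^2$. This factor is exactly the left-hand side of \eqref{eq:condition-compress}, so isolating it early organizes the whole argument.

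I would then take the Lyapunov function to be the DQM energy augmented by a tracking term, $V_k=\|\vvx_k-\vvx^*\|_{\mH}^2+\tfrac1c\|\vphi_k-\vphi^*\|_{(\mL\otimes\mI)^\dagger}^2+\gamma\|\ve_{k-1}\|^2$, and establish $\mathbb{E}(V_{k+1}\mid\mathcal F_k)\le\sigma V_k$. The computation splits into a \emph{clean} part—what the DQM descent yields if $\vvy_k=\vvx_k$, giving a strictly negative drift proportional to $\|\vvx_{k+1}-\vvx_k\|^2$ whose coefficient is controlled by $v,\ell,c$ and the spectra through $\hat\lambda_1,\lambda_2,\lambda_n,\hat\lambda_n$—and an \emph{error} part collecting every term carrying $\ve_k$ or $\ve_{k-1}$. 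Here $\beta$ enters as the Young's-inequality weight splitting the cross terms created by replacing $\vvx$ with the stale states $\vvy$ in the gradient-correction and consensus terms; the requirement $\beta>\ell^2/(2c\lambda_2v)$ keeps the relevant denominator positive so that the surviving negative drift, collected as $G(\beta)$, can be made strictly positive. Since both the good drift and the amplified error terms ultimately reduce to multiples of $\|\vvx_{k+1}-\vvx_k\|^2$—the good one with coefficient $G(\beta)$, the bad one with coefficient $\big(3c\lambda_n+2c\beta\lambda_n+\tfrac{c\lambda_n^2}{\beta\lambda_2}\big)\cdot\tfrac{\delta}{(1-\sqrt\delta)^2}$—net negativity is precisely \eqref{eq:condition-compress}. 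When it holds one can fix $\gamma$ and then $\sigma\in(0,1)$ with $\mathbb{E}(V_{k+1}\mid\mathcal F_k)\le\sigma V_k$; taking full expectations and iterating yields the $\mathcal O(\sigma^k)$ linear rate for $\mathbb{E}(\vvx_k)\to\vvx^*$.

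The hard part will be the bookkeeping in the descent step: unlike plain DQM, the Hessian is evaluated at $\vvy_{i,k}$ rather than $\vvx_{i,k}$, and both the correction and consensus terms use the compressed states $\vvy$, so at every place where the DQM argument uses $\vvx$ I must insert and bound a $\vvx-\vvy=\ve$ perturbation, invoking $\ell$-smoothness for the gradient and Hessian gaps and the contractive bound for the state gaps. Keeping these perturbations aligned so that they all collapse to controllable multiples of $\|\ve_{k-1}\|^2$ and $\|\vvx_{k+1}-\vvx_k\|^2$—hence absorbed by the tracking term and the good drift—is the crux; the weight $\beta$ and the amplification factor $\tfrac{\delta}{(1-\sqrt\delta)^2}$ are the two knobs that make this absorption work, and checking that the admissible set of $(c,\delta,\beta)$ in \eqref{eq:condition-compress} is nonempty completes the argument.
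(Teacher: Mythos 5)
Your plan follows essentially the same route as the paper's proof: the same stacked reformulation and KKT fixed point, the same key error recursion $\mathbb{E}(\|\vve_{k+1}\|^2)\le\sqrt{\delta}\,\mathbb{E}(\|\vve_k\|^2)+\tfrac{\delta}{1-\sqrt{\delta}}\mathbb{E}(\|\vvx_{k+1}-\vvx_k\|^2)$ (the paper's Lemma~\ref{lemma:error-eve-comp}), and the same Lyapunov function structure combining a primal error norm, a dual error norm (the paper works with $\vr_k$ satisfying $\vphi_k=2c\mM^{\tr}\vr_k$, equivalent to your pseudo-inverse-weighted $\vphi$ term), and an $r$-weighted tracking term, with condition \eqref{eq:condition-compress} emerging exactly as you describe from balancing the $G(\beta)$ drift against the $\tfrac{\delta}{(1-\sqrt{\delta})^2}$-amplified error coefficient. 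The only cosmetic deviation is your choice of primal weight ($\mH$ rather than the paper's fixed $\tfrac{c}{2}\mL_{\ts}$, which you should keep time-invariant so the telescoping goes through), so the proposal is correct and matches the paper's argument.
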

%
%
%
The introduction of compression makes the update inexact and therefore may slow down the convergence rate.
But when the $\delta$ is not very large, the effect is almost negligible.
	To satisfy \eqref{eq:condition-compress}, $\delta$ can not be too large, which means that excessive compression of the information to be transmitted should be avoided.
	 The RHS of \eqref{eq:condition-compress}  has a {global  maximum} $F^*$ only determined by the communication graph,  meaning that the choice of $\delta$
 is  related to the  graph  but not  the objective function. 
When  $\delta$ is chosen such that the LHS of \eqref{eq:condition-compress} is less than $F^*$,
	then there always exists a sufficiently large $c$ such that \eqref{eq:condition-compress}  holds.
	Moreover, when we adopt Example \ref{example-biased-quantize} or Example \ref{example-unbiased-quantize},  $\delta$ decays  exponentially as the  number of quantization bits $b$ increases. So a very small $b$ can satisfy the requirement, which will be demonstrated in our experiment.   
The restriction on $\delta$ implies that to ensure a linear convergence, the decaying rate of the compressed error can not be too slow.
\begin{corollary}\label{pro:unbiased-compressor}
	Under Assumptions \ref{assump:f} and \ref{assump:graph}, let $\mathcal{C}$ be  $\delta$-contractive compressor, when $\mu_k=0$ and $\mathcal{C}$ is unbiased, i.e. $\mathbb{E}\big(\mathcal{C}(\vx)\big)=\vx$, if
	 $\frac{\delta}{(1-\sqrt{\delta})^2}<\frac{\hat{\lambda}_1}{3\lambda_n}$ and    $c>\frac{\ell^2}{v}\frac{2(1-\sqrt{\delta})^2}{\hat{\lambda}_1(1-\sqrt{\delta})^2-3\lambda_n\delta}$,
	 the sequence {$\mathbb{E}(\vvx_k)$} is convergent to the optimal solution
	$\vx^*$ at a linear rate.
\end{corollary}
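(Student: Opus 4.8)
The plan is to reopen the proof of Theorem~\ref{theorem:concractive-compression} and isolate the single place where the compression error enters as a first-order quantity. In that analysis the compression residual $\mathbf{q}_k:=\mathcal{C}(\vvx_{k+1}-\vvy_k)-(\vvx_{k+1}-\vvy_k)$ contributes to the expected one-step change of the Lyapunov function in two ways: a quadratic contribution, controlled through the contractivity bound $\mathbb{E}\|\mathbf{q}_k\|^2\le\delta\,\mathbb{E}\|\vvx_{k+1}-\vvy_k\|^2$, and a cross term of the form $\langle\mathbf{a}_k,\mathbf{q}_k\rangle$, where $\mathbf{a}_k$ is assembled from $\vvx_{k+1}$, $\vvy_k$ and $\vphi_k$. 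For a general $\delta$-contractive compressor this cross term must be absorbed by Young's inequality, and it is exactly this step that simultaneously consumes half of the consensus descent (leaving the leading coefficient $\tfrac{c\hat{\lambda}_1}{2}$ in $G(\beta)$) and injects the term $2c\beta\lambda_n$ into the denominator of \eqref{eq:condition-compress}.

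The key observation is that when $\mathcal{C}$ is unbiased this cross term disappears in expectation. Fixing the filtration $\mathcal{F}_k$ generated by all randomness preceding the step-$(k+1)$ compression, both $\mathbf{a}_k$ and $\vvx_{k+1}-\vvy_k$ are $\mathcal{F}_k$-measurable, while the fresh compression noise is independent of $\mathcal{F}_k$; hence $\mathbb{E}(\mathbf{q}_k\mid\mathcal{F}_k)=0$ and, by the tower property, $\mathbb{E}\langle\mathbf{a}_k,\mathbf{q}_k\rangle=\mathbb{E}\big\langle\mathbf{a}_k,\mathbb{E}(\mathbf{q}_k\mid\mathcal{F}_k)\big\rangle=0$. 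I would therefore redo the expectation step of Theorem~\ref{theorem:concractive-compression} with this term set to zero: there is no longer any need to sacrifice half of the descent, so the numerator is restored to $2G(\beta)$, and the Young parameter no longer feeds the $2c\beta\lambda_n$ contribution, so the compression coefficient collapses to $3c\lambda_n+\tfrac{c\lambda_n^2}{\beta\lambda_2}$. The resulting sufficient condition is
\[
\frac{\delta}{(1-\sqrt{\delta})^2}<\frac{2\,G(\beta)}{3c\lambda_n+\frac{c\lambda_n^2}{\beta\lambda_2}},
\]
which still holds for any $\beta>\frac{\ell^2}{2c\lambda_2 v}$.

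Next I would optimize over $\beta$. Because the offending $2c\beta\lambda_n$ term is now absent, sending $\beta\to\infty$ is admissible: the $\tfrac{1}{\beta}$-terms vanish, $G(\beta)\to\tfrac{c\hat{\lambda}_1}{2}-\tfrac{\ell^2}{v}$, and the denominator tends to $3c\lambda_n$, so that for every $\delta$ obeying the strict inequality there is a large finite $\beta$ for which the condition reduces to
\[
\frac{\delta}{(1-\sqrt{\delta})^2}<\frac{c\hat{\lambda}_1-2\ell^2/v}{3c\lambda_n}.
\]
Writing $A=\frac{\delta}{(1-\sqrt{\delta})^2}$, this is equivalent to $c(\hat{\lambda}_1-3\lambda_n A)>2\ell^2/v$, which admits a finite $c$ precisely when $\hat{\lambda}_1-3\lambda_n A>0$, i.e.\ $A<\frac{\hat{\lambda}_1}{3\lambda_n}$, and then forces $c>\frac{2\ell^2/v}{\hat{\lambda}_1-3\lambda_n A}=\frac{\ell^2}{v}\frac{2(1-\sqrt{\delta})^2}{\hat{\lambda}_1(1-\sqrt{\delta})^2-3\lambda_n\delta}$. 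These are exactly the two hypotheses of the corollary, and linear convergence of $\mathbb{E}(\vvx_k)$ then follows directly from Theorem~\ref{theorem:concractive-compression}.

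The main obstacle I anticipate is the rigorous justification that the cross term vanishes in expectation: one must set up the filtration so that the vector $\mathbf{a}_k$ pairing with $\mathbf{q}_k$ is genuinely $\mathcal{F}_k$-measurable, with no hidden dependence on the step-$(k+1)$ compression noise, and one must verify that every remaining term of the Theorem~\ref{theorem:concractive-compression} descent inequality is reproduced verbatim, so that the only changes are the restoration of $G(\beta)$ to $2G(\beta)$ and the deletion of $2c\beta\lambda_n$. Once this is in place, the $\beta\to\infty$ limit and the final algebraic rearrangement are routine.
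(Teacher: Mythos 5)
Your approach is the same as the paper's: exploit unbiasedness so that the compression-error cross terms vanish in expectation via the tower property, then rerun the descent inequality and extract the parameter conditions; your final threshold $\frac{\delta}{(1-\sqrt\delta)^2}<\frac{c\hat{\lambda}_1-2\ell^2/v}{3c\lambda_n}$ and the resulting bound on $c$ coincide exactly with what the paper derives. However, the step you yourself flag as the "main obstacle" is a genuine gap, and it does not resolve the way you assume. The cross term coming from the dual update pairs $\vve_{k+1}$ with $\mM^{\tr}(\vr_{k+1}-\vr^*)$, and $\vr_{k+1}=\vr_k+\tfrac14\mM\vvy_{k+1}=\vr_k+\tfrac14\mM\vvx_{k+1}+\tfrac14\mM\vve_{k+1}$ is \emph{not} $\mathcal{F}_k$-measurable: it contains the fresh compression noise. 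Only the $\mathcal{F}_k$-measurable part annihilates; the remainder leaves a quadratic residual $c\,\mathbb{E}(\|\vve_{k+1}\|^2_{\mL})\le c\lambda_n\mathbb{E}(\|\vve_{k+1}\|^2)$ that must be carried through Lemma~\ref{lemma:error-eve-comp}. This residual is not a technicality of bookkeeping: together with the $\tfrac{c\lambda_n}{2}\mathbb{E}(\|\vve_k\|^2)$ surviving from the $\mL(\vve_{k+1}-\vve_k)$ term, it is exactly what produces the constant $3\lambda_n$ (rather than a smaller one) in the corollary's hypothesis. If the cross term were dropped wholesale, as your argument states, the denominator would shrink and you would derive a strictly more permissive (hence unproven) condition than the one claimed.

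Two smaller inaccuracies in the narrative: the leading $\tfrac{c\hat{\lambda}_1}{2}$ is not "half the descent sacrificed to Young's inequality on the cross term" — it comes from the polarization identity applied to $\Xi_1$ and is unchanged in the unbiased case (the paper's corollary proof still has coefficient $\tfrac{\eta\gamma^2}{2}-\tfrac{c\hat\lambda_1}{2}$); your "$2G(\beta)$ over $3c\lambda_n$" is numerically the same fraction as the paper's "$G$ over $\tfrac{3}{2}c\lambda_n$", so the factor of two is an artifact of normalization, not a recovered term. Likewise, in the unbiased case the paper does not need the bound \eqref{eq:rk-rstar-bound} with a $\beta^{-1}$ weight at all (it only enters multiplied by the vanishing rate gap $\sigma$), so $\beta$ simply disappears from the conditions rather than being sent to infinity; your limiting argument reaches the same endpoint but the intermediate finite-$\beta$ condition you write down is not what the unbiased analysis actually yields. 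To close the proof you should split $\vr_{k+1}-\vr^*$ into its $\mathcal{F}_k$-measurable part plus $\tfrac14\mM\vve_{k+1}$, retain the resulting $c\lambda_n\mathbb{E}(\|\vve_{k+1}\|^2)$ term, and then re-derive the three sign conditions on the coefficients; the algebra then lands on precisely the stated hypotheses.
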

Finally, we will give the result of combining the compression with the communication-censored mechanism.
\begin{theorem}\label{thm:main-result}
	Under Assumptions \ref{assump:f} and \ref{assump:graph}, let $\mathcal{C}$ be  $\delta$-contractive compressor, if $c$ and $\delta$ are chosen such that \eqref{eq:condition-compress} holds and
	$\mu_k=\alpha\rho^{k-1}$ { with $\alpha>0$, $0<\rho<1$,} 
the sequence {$\mathbb{E}(\vvx_k)$} is convergent  to the optimal solution
$\vx^*$ at a linear rate $\mathcal{O}(\tilde{\sigma}^k)$, where $\tilde{\sigma}=\max(\sigma,\rho)$.
\end{theorem}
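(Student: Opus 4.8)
The plan is to reduce Theorem \ref{thm:main-result} to the compression-only analysis of Theorem \ref{theorem:concractive-compression}, by showing that the event-triggering contributes nothing more than an additive, geometrically decaying perturbation to the Lyapunov recursion established there. The guiding observation is that in both branches of Algorithm \ref{algo:PC-DQM} the state update can be written in the single form $\vy_{i,k+1}=\vx_{i,k+1}-\ve_{i,k}$, so the recursion driving the convergence analysis is structurally identical to the pure-compression case; only the size of the injected error $\ve_{i,k}$ changes between the triggered and censored branches.

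First I would isolate this effective error. Set $\ve_{i,k}:=\vx_{i,k+1}-\vy_{i,k+1}$. When the trigger fires, $\vy_{i,k+1}=\mathcal{C}(\vx_{i,k+1}-\vy_{i,k})+\vy_{i,k}$, so $\ve_{i,k}=(\vx_{i,k+1}-\vy_{i,k})-\mathcal{C}(\vx_{i,k+1}-\vy_{i,k})$ and Definition \ref{def-compressor} gives $\mathbb{E}\big(\norm{\ve_{i,k}}^2\big)\le\delta\norm{\vx_{i,k+1}-\vy_{i,k}}^2$, exactly as in Theorem \ref{theorem:concractive-compression}. When the trigger does not fire, $\vy_{i,k+1}=\vy_{i,k}$ and the censoring rule forces $\norm{\ve_{i,k}}=\norm{\vx_{i,k+1}-\vy_{i,k}}<\mu_k$. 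Since $0\le\delta<1$, the two cases combine into the single bound
\begin{align}
\mathbb{E}\big(\norm{\ve_{i,k}}^2\big)\le\delta\norm{\vx_{i,k+1}-\vy_{i,k}}^2+\mu_k^2,\notag
\end{align}
which is precisely the pure-compression bound augmented by the extra term $\mu_k^2=\alpha^2\rho^{2(k-1)}$. Establishing this unified inequality is the conceptual crux of the whole argument.

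Next I would substitute this bound into the same Lyapunov construction used to prove Theorem \ref{theorem:concractive-compression}. In that proof the contraction rate $\sigma$ arises because the error $\ve_{i,k}$ enters only through $\delta\norm{\vx_{i,k+1}-\vy_{i,k}}^2$, a quantity that condition \eqref{eq:condition-compress} renders small enough to be absorbed into the decrease of the Lyapunov function. Because our $\ve_{i,k}$ satisfies the identical bound plus $\mu_k^2$, and because the state recursion has the identical form $\vy_{i,k+1}=\vx_{i,k+1}-\ve_{i,k}$, every estimate in that proof transfers verbatim; the additional $\mu_k^2$ contributions, summed over the $n$ agents, aggregate into a single forcing term. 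One thus obtains a perturbed recursion of the form
\begin{align}
V_{k+1}\le\sigma^2 V_k+c_1\alpha^2\rho^{2(k-1)},\notag
\end{align}
for some constant $c_1>0$ independent of $k$, where $V_k$ denotes the squared Lyapunov/error measure of Theorem \ref{theorem:concractive-compression} and $0<\sigma<1$ is its rate. Crucially, the contraction factor $\sigma^2$ is untouched, since \eqref{eq:condition-compress} is assumed to hold throughout.

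Finally I would close by unrolling the recursion, $V_k\le\sigma^{2k}V_0+c_1\sum_{j=0}^{k-1}\sigma^{2(k-1-j)}\mu_j^2$, and invoking the elementary fact that the convolution of two geometric sequences with ratios $\sigma^2$ and $\rho^2$ decays like $\big(\max(\sigma^2,\rho^2)\big)^k$, up to a factor at worst linear in $k$ in the borderline case $\sigma=\rho$. Hence $V_k=\mathcal{O}(\tilde{\sigma}^{2k})$ with $\tilde{\sigma}=\max(\sigma,\rho)<1$, and taking square roots yields that $\mathbb{E}(\vvx_k)$ converges to $\vx^*$ at the linear rate $\mathcal{O}(\tilde{\sigma}^k)$. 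The main obstacle is the bookkeeping in the first two steps: confirming that the censored branch genuinely injects an error no larger than $\mu_k$ and that this merges cleanly with the contractive-compressor bound, so that $\mu_k^2$ enters the recursion purely additively and does not couple back to degrade $\sigma$; once the unified inequality is in hand, the remaining steps are mechanical.
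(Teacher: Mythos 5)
Your proposal follows essentially the same route as the paper: the unified bound $\mathbb{E}(\|\ve_{i,k}\|^2)\leq\delta\|\vx_{i,k+1}-\vy_{i,k}\|^2+\mu_k^2$ covering both the triggered and censored branches is exactly the content of the paper's Lemma~\ref{lemma:error-eve-comp}, substituting it into the compression-only Lyapunov analysis to get $V_{k+1}\leq\sigma V_k+c_1\mu_{k+1}^2$ is the paper's Lemma~\ref{lemma:qk-estimate}, and unrolling the perturbed recursion via the geometric convolution (with the linear-in-$k$ factor in the borderline case) is the paper's proof of Theorem~\ref{thm:main-result}. The argument is correct, with only notational differences (your contraction factor $\sigma^2$ versus the paper's $\sigma=\tfrac{1}{1+\hat{\sigma}}$ applied to the squared measure $V_k$).
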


Theorem \ref{thm:main-result} shows that CC-DQM can still achieve an exact and linear convergence  after combining event-triggered communication with compression if  $\mu_k$  decays linearly. 
It is worth noting that the convergence rate parameter
$\mathcal{\tilde{\sigma}}$ equals  $\max(\sigma,\rho)$, which
implies that
the convergence rate of CC-DQM can not exceed the decaying rate of the threshold. 

\section{Numerical Experiments}\label{sec:simulation}
This section provides numerical simulations  to show the performance of CC-DQM.
We consider a  logistic regression problem where the dataset comprises  German credit data from the UCI Machine Learning Repository.
Define the connectivity ratio  $\tau$ as the number of  edges 
divided by $\frac{n(n-1)}{2}$.
The communication graph is a stochastic graph with connectivity ratio $\tau=0.4$.
There exist $n=100$ agents in the graph and each agent holds $m_i=10$ samples. 
The optimization problem is $\min _{\vx \in \mathbb{R}^{d}} f(\vx)=\sum_{i=1}^{n} \frac{1}{m_i} \sum_{j=1}^{m_i} \log \left(1+ e^{-b_{ij} \vx^{\tr} \va_{ij}}\right),$ 
where $\va_{ij} \in \RR^{24}$ represents the feature vector  and $b_{ij}\in\{1,-1\}$ represents the label. 
Moreover, we define $\mathrm{err}_k := \frac{\left\|\mathbf{x}_k-\mathbf{x}^*\right\|^2}{\left\|\mathbf{x}_0-\mathbf{x}^*\right\|^2}$ to measure the convergence.
We tune the parameter $c$ such that DQM can achieve the fastest convergence and $\rho$ is tuned to make C-DQM get the best communication rounds performance.

We first compare CC-DQM with the existing 
 ADMM-type communication-efficient algorithm including COCA, DQM, C-DQM. 
COCA \cite{TSP-Liu-Xu-etc2019} is a communication-censored ADMM, where agents need to solve subproblems at every iteration.
C-DQM \cite{TNNLS-Zhang-Yang-etc2022} is the communication-censored version of DQM.
In this experiment, the compressor we implement is Example \ref{example-biased-quantize}, the deterministic quantizer.
The relevant results can be seen in Fig. \ref{fig:com-ADMM-type}.
As we can see,  CC-DQM is the most communication efficient as it can not only save communication rounds but also reduce the transmitted bits per communication round.
Our Theorem \ref{theorem:concractive-compression} shows that  to achieve a linear and exact convergence,
the number of quantization bits can not be too small.
In our experiment, CC-DQM can converge linearly when we implement 
2 bit deterministic quantization.
Moreover, it is worth noting that the convergence of CC-DQM is  nearly the same as DQM.
	\begin{figure}[htb]
	
	\begin{minipage}[b]{.48\linewidth}
		\centering
		\centerline{\includegraphics[width=4.0cm]{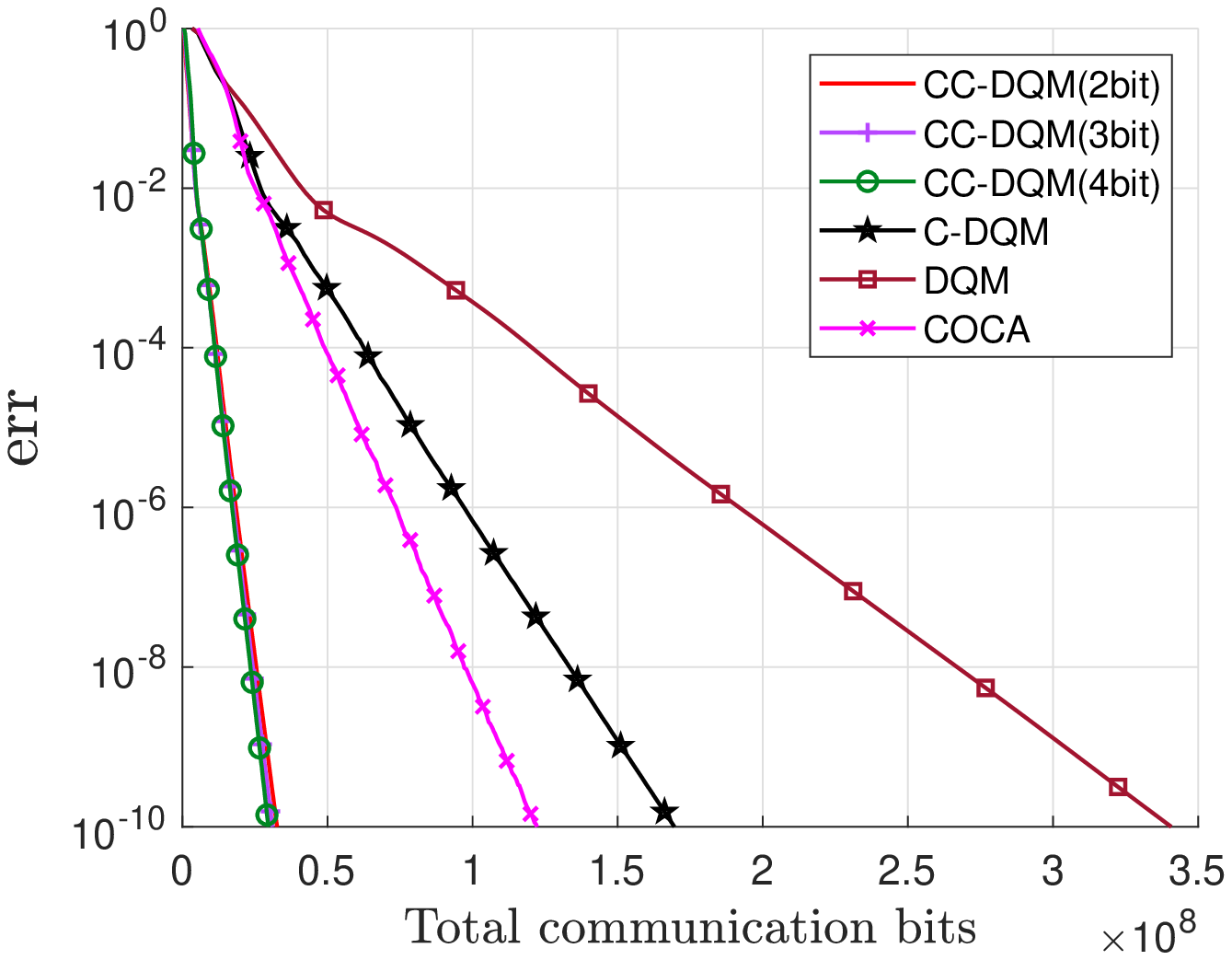}}
	\end{minipage}
	\begin{minipage}[b]{.48\linewidth}
		\centering
		\centerline{\includegraphics[width=4.0cm]{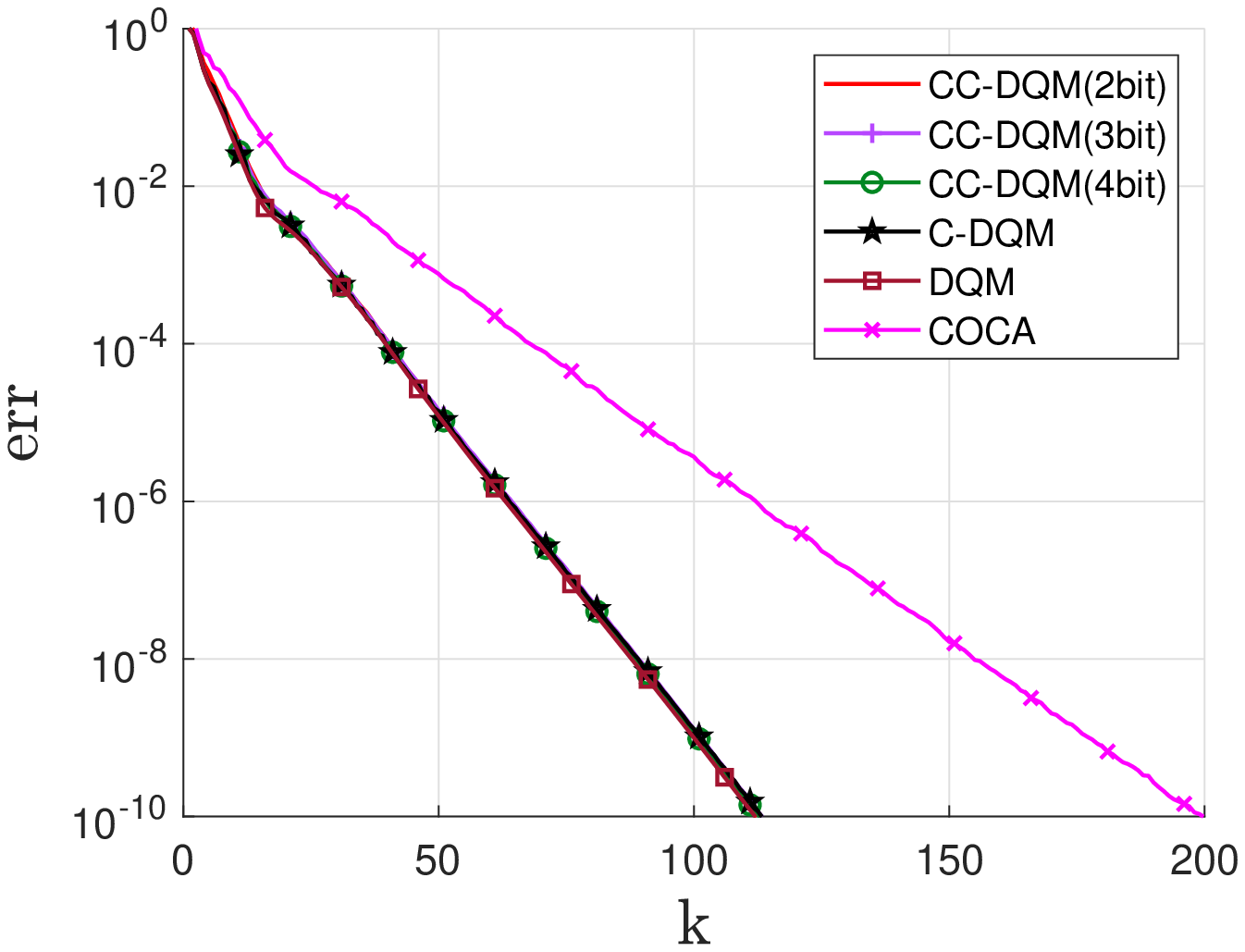}}
	\end{minipage}
	
	\caption{Comparison with the existing ADMM-type communication-efficient algorithm.}
	\label{fig:com-ADMM-type}
\end{figure}
We then compare CC-DQM with the existing first-order communication-efficient  methods, including SPARQ-SGD \cite{TAC-Singh-Data-etc2022} and LEAD \cite{Arxiv-Liu-Li-etc2020}. SPARQ-SGD combines  event-triggered communication with compressed communication and LEAD is a communication-compressed algorithm. 
For a fair  comparison, in SPARQ-SGD,  we use the full gradient  instead of the stochastic gradient.
We consider  biased compressor Example \ref{example-biased-quantize}  and  the unbiased compressor Example \ref{example-unbiased-quantize}. In both  schemes, we let $b=2$. 
As shown in Fig. \ref{fig:bias-First-type}, compared with the existing first-order methods,  no matter what kind of compressor is implemented, CC-DQM always enjoys the smallest 
communication cost. This is because CC-DQM enjoys a faster convergence rate than other algorithms.
%
%
	\begin{figure}[htb]
	\vspace{-0.29cm}
	\begin{minipage}[b]{.48\linewidth}
		\centering
\centerline{\includegraphics[width=4.4cm]{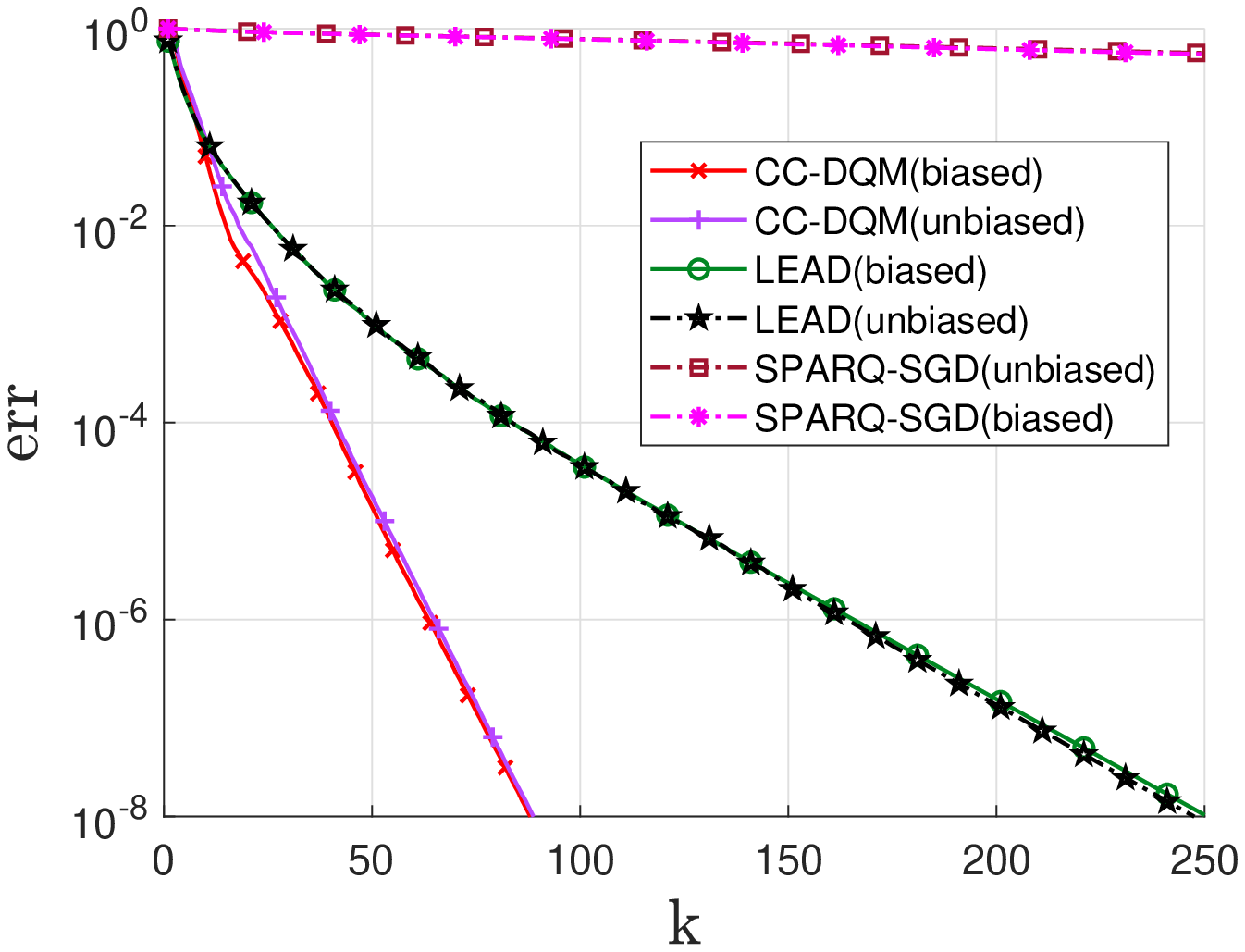}}
	\end{minipage}
	\begin{minipage}[b]{.45\linewidth}
		\centering
\centerline{\includegraphics[width=4.4cm]{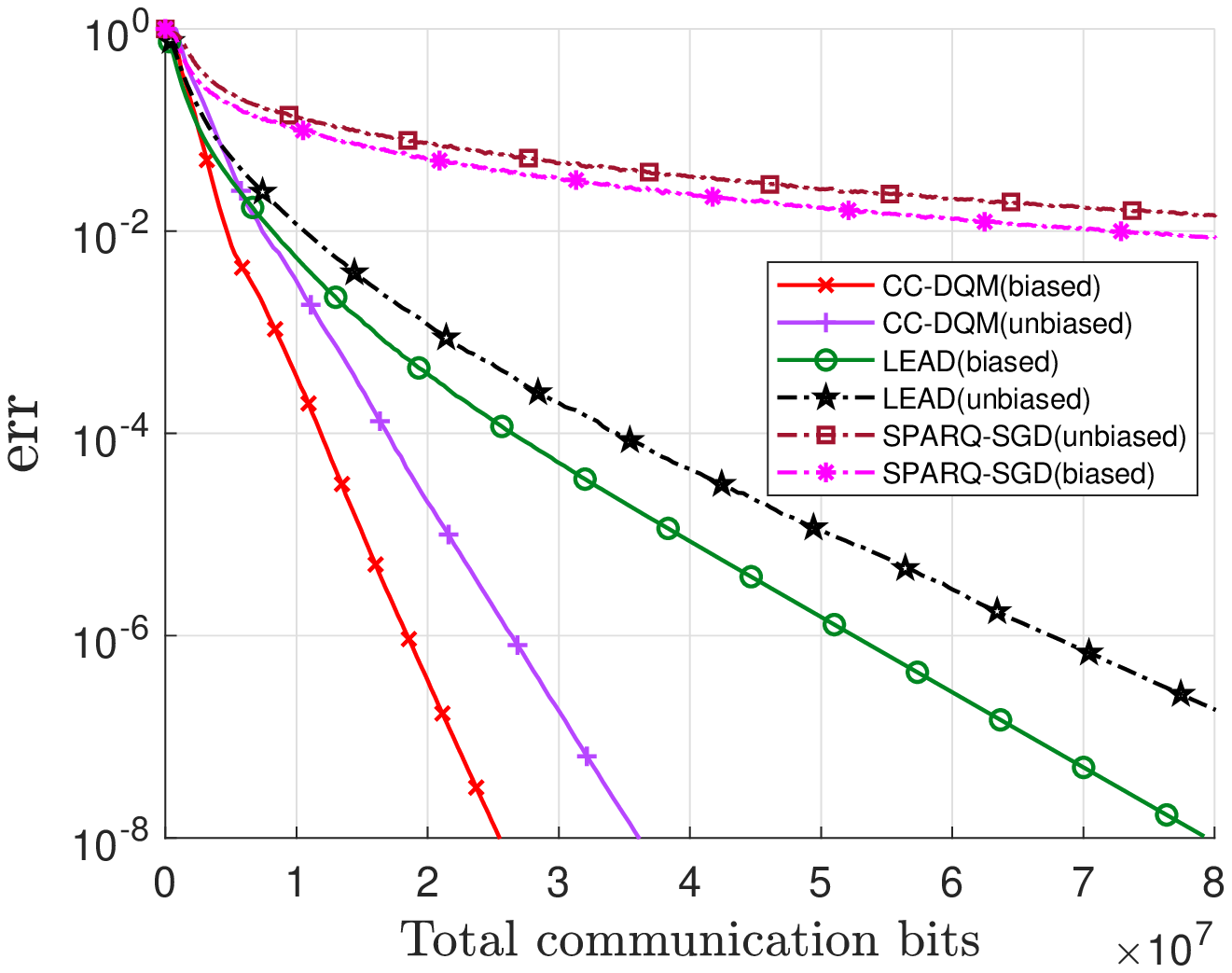}}
	\end{minipage}			
	
	\caption{Comparison with the performance of existing first-order algorithms.}
	\label{fig:bias-First-type}
\end{figure}

\small
	\bibliographystyle{IEEE.bst}
	\bibliography{reference}

\newpage

\appendix

\centerline{\bf \Large Supplementary Material}

\section{Preparation for the Proof}
We first give the matrix form of CC-DQM, which is as follows:
\begin{subequations}\label{eq:PC-DQM-Compact}
	\begin{align}
		\vvx_{k+1} ={} & \vvx_{k}-\tilde{\mD}^{-1}\bigg( \nabla f(\vvx_k) + \vphi_{k} + c \mL \vvy_{k} \bigg)   \label{eq:PC-DQM-Compact-x}\\
		\vphi_{k+1}={} & \vphi_{k} + c \mL \vvy_{k+1} \label{eq:PC-DQM-Compact-phi}
	\end{align}
\end{subequations}
where $\tilde{\mD} = 2c \mD+ \nabla^2 f(\vvy_k)$.
To proof Theorem 1 and Theorem 2, we  introduce a key Lemma estimating  the error caused by
event-triggered communication and compression.
Define $\vve_{k}=\vvy_{k}-\vvx_{k}$.
\begin{lemma} \label{lemma:error-eve-comp}
	Denote $\mathcal{C}$ as the contractive operator with the parameter $\delta \in[0,1),$ in CC-DQM, the error $\vve_{k+1}$ satisfies
	\begin{align}
		\mathbb{E}(\|\vve_{k+1}\|^2)\leq & \sqrt{\delta}	\mathbb{E}(\|\vve_{k}\|^2)+	\frac{\delta}{1-\sqrt{\delta}}	\mathbb{E}(\|\vvx_{k+1}-\vvx_{k}\|^2)  \notag \\
		&+n\mu_{k+1}^2.   \label{eq:error-eve-comp}
	\end{align}
\end{lemma}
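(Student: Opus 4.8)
The plan is to prove the bound agent by agent and then sum over $i$, exploiting the fact that for each block $i$ the error $\vve_{i,k+1}=\vy_{i,k+1}-\vx_{i,k+1}$ is produced by exactly one of the two branches of Algorithm~\ref{algo:PC-DQM}. First I would fix an agent $i$ and condition on the filtration $\mathcal{F}_k$ generated by all randomness up to the computation of $\vx_{i,k+1}$. The innovation $\|\vx_{i,k+1}-\vy_{i,k}\|$ is $\mathcal{F}_k$-measurable, so the decision of whether the trigger fires is fixed before the (possibly random) compressor $\mathcal{C}$ is applied; this lets me apply Definition~\ref{def-compressor} as a conditional expectation and keep the two branches cleanly separated.

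In the triggered branch $\vy_{i,k+1}=\mathcal{C}(\vx_{i,k+1}-\vy_{i,k})+\vy_{i,k}$, so $\vve_{i,k+1}=\mathcal{C}(\vx_{i,k+1}-\vy_{i,k})-(\vx_{i,k+1}-\vy_{i,k})$ is exactly the compression error, and Definition~\ref{def-compressor} gives $\mathbb{E}(\|\vve_{i,k+1}\|^2\mid\mathcal{F}_k)\le\delta\|\vx_{i,k+1}-\vy_{i,k}\|^2$. I would then rewrite the innovation as $\vx_{i,k+1}-\vy_{i,k}=(\vx_{i,k+1}-\vx_{i,k})-\vve_{i,k}$ and split it by Young's inequality $\|a+b\|^2\le(1+\eta)\|a\|^2+(1+\eta^{-1})\|b\|^2$ with $a=-\vve_{i,k}$. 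The key step is to pick the free parameter $\eta=(1-\sqrt{\delta})/\sqrt{\delta}$, so that $\delta(1+\eta)=\sqrt{\delta}$ multiplies $\|\vve_{i,k}\|^2$ and $\delta(1+\eta^{-1})=\delta/(1-\sqrt{\delta})$ multiplies $\|\vx_{i,k+1}-\vx_{i,k}\|^2$; this single tuned inequality is what reproduces both advertised coefficients, yielding $\mathbb{E}(\|\vve_{i,k+1}\|^2\mid\mathcal{F}_k)\le\sqrt{\delta}\|\vve_{i,k}\|^2+\frac{\delta}{1-\sqrt{\delta}}\|\vx_{i,k+1}-\vx_{i,k}\|^2$.

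In the untriggered branch $\vy_{i,k+1}=\vy_{i,k}$, so $\vve_{i,k+1}=-(\vx_{i,k+1}-\vy_{i,k})$ and directly $\|\vve_{i,k+1}\|^2=\|\vx_{i,k+1}-\vy_{i,k}\|^2<\mu_{k+1}^2$ by the trigger rule. To merge the branches into one inequality I would add the missing nonnegative terms to each: the triggered bound is unchanged by adding $\mu_{k+1}^2$, whereas in the untriggered case the two quadratic terms are nonnegative so $\mu_{k+1}^2$ already dominates. Hence in both cases $\mathbb{E}(\|\vve_{i,k+1}\|^2\mid\mathcal{F}_k)\le\sqrt{\delta}\|\vve_{i,k}\|^2+\frac{\delta}{1-\sqrt{\delta}}\|\vx_{i,k+1}-\vx_{i,k}\|^2+\mu_{k+1}^2$. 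I would then take total expectation via the tower property, sum over $i=1,\dots,n$, and use $\|\vve_k\|^2=\sum_i\|\vve_{i,k}\|^2$ and $\|\vvx_{k+1}-\vvx_k\|^2=\sum_i\|\vx_{i,k+1}-\vx_{i,k}\|^2$; the $n$ copies of $\mu_{k+1}^2$ assemble into the $n\mu_{k+1}^2$ term, giving \eqref{eq:error-eve-comp}.

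I expect no deep obstacle: the argument is a two-way case analysis glued together by one well-chosen Young's inequality. The only points needing care are the exact choice of $\eta$ (any other value fails to match both coefficients at once) and the bookkeeping of conditional versus total expectation for a randomized compressor such as Example~\ref{example-unbiased-quantize}, where the contractive estimate must be invoked conditionally on $\mathcal{F}_k$ before averaging.
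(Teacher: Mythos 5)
Your proposal is correct and follows essentially the same route as the paper's own proof: a two-branch case analysis per agent, with the contractive property applied conditionally in the triggered branch and Young's inequality with the parameter $t=\frac{1}{\sqrt{\delta}}-1$ (your $\eta$) producing the coefficients $\sqrt{\delta}$ and $\frac{\delta}{1-\sqrt{\delta}}$, then summing over agents to assemble $n\mu_{k+1}^2$. Your write-up is, if anything, slightly more explicit than the paper about how the two branches are merged into a single inequality and about the conditioning needed for a randomized compressor.
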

\begin{proof}
	For agent $i$ at iteration $k+1$, if $h_{i,k}=\|\vx_{i,k+1}-\vy_{i,k}\|-\mu_{k+1}<0$, then communication is not triggered and $\vy_{i,k+1}=\vy_{i,k}$. So we can get 
	\begin{align}\label{eq:censor-noise-estimate}
		\ve_{i,k+1}=\|\vx_{i,k+1}-\vy_{i,k+1}\|<\mu_{k+1}
	\end{align}
	When $h_{i,k}>0$, $\vy_{i,k+1}=\mathcal{C}(\vx_{i,k+1}-\vy_{i,k})+\vy_{i,k}$.
	So we can know 
	\begin{align}
		\ve_{i,k+1}=&\mathcal{C}(\vx_{i,k+1}-\vy_{i,k})-(\vx_{i,k+1}-\vy_{i,k}) 
	\end{align}	 
	According to the property of compressor, we can obtain:
	\begin{align}
		\mathbb{E}(\| \ve_{i,k+1}\|^2|\vy_{i,k},\vx_{i,k+1}) \leq{}& \delta\|\vx_{i,k+1}-\vy_{i,k}\|^2\notag \\
		\leq {}&  \delta\|\vx_{i,k+1}-\vx_{i,k}-(\vy_{i,k}-\vx_{i,k})\|^2 \notag \\
		\leq {}&    \delta(1+t^{-1})(\|\vx_{i,k+1}-\vx_{i,k}\|^2) \notag \\ 
		& + \delta(1+t)\|\ve_{i,k}\|^2
	\end{align}
	Let $t=\frac{1}{\sqrt{\delta}}-1$ and take expectations, then we can obtain
	\begin{align} \label{eq:quantize-noise-estimate}
		\mathbb{E}(\|\ve_{i,k+1}\|_2^2) \leq \sqrt{\delta}\mathbb{E}(\|\ve_{i,k}\|_2^2)+\frac{\delta}{1-\sqrt{\delta}}\mathbb{E}(\|\vx_{i,k+1}-\vx_{i,k}\|^2).
	\end{align}
	By combing  \eqref{eq:censor-noise-estimate} and \eqref{eq:quantize-noise-estimate}, then we can finish the proof.
\end{proof}

Since CC-DQM is an ADMM-type algorithm, according to \cite{TSP-Shi-Ling-etc2014}, we give 
its optimal condition.
\begin{lemma}\label{lemma:optimal-condition}
	Suppose $(\vvx^*,\vvz^*,\vlambda^*)$ is a primal-dual optimal pair of the augmented Larriangian. Then, it holds that $\mM \vvx^{*} = \mathbf{0}$, $\frac{1}{2} \mM_{\ts} \vvx^{*} = \vvz^{*}$, and there exist a unique $\vmu^*$  lying in the column space of $\mM$
	satisfying $\vphi^* = \mM^{\tr} \vmu^*$, $\vlambda^{*} = [\vmu^*; -\vmu^*]$, and 
	\begin{align}
		\nabla f(\vvx^{*}) + \vphi^{*} & = \mathbf{0}. \label{eq:opt-condition-phi*}
	\end{align}
\end{lemma}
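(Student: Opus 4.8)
\section*{Proof proposal for Lemma~\ref{lemma:optimal-condition}}

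The plan is to recognize Lemma~\ref{lemma:optimal-condition} as the KKT system of the augmented Lagrangian underlying the ADMM reformulation, following the edge-variable formulation of \cite{TSP-Shi-Ling-etc2014}. First I would lift the consensus problem \eqref{eq:optimization_problem} by introducing one edge variable per edge and imposing, for each edge $e=(i,j)$, the two constraints $\vx_i=\vz_e$ and $\vx_j=\vz_e$. Stacking over all edges gives $\mA_s\vvx=\vvz$ and $\mA_d\vvx=\vvz$, where $\mA_s,\mA_d$ are the source/destination node-selection matrices; setting $\mM=\mA_s-\mA_d$ and $\mM_\ts=\mA_s+\mA_d$ makes $\mM^\tr\mM$ and $\mM_\ts^\tr\mM_\ts$ coincide (up to scaling) with $\mL$ and $\mL_\ts$. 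With $\mA=[\mA_s;\mA_d]$ and $\mB=-[\mI;\mI]$ the coupling reads $\mA\vvx+\mB\vvz=\mathbf 0$, and the augmented Lagrangian is $\mathcal L(\vvx,\vvz,\vlambda)=f(\vvx)+\vlambda^\tr(\mA\vvx+\mB\vvz)+\tfrac{c}{2}\|\mA\vvx+\mB\vvz\|^2$ with $\vlambda=[\vlambda_1;\vlambda_2]$. Since the problem is convex with affine constraints, a primal-dual optimal triple is exactly a KKT point, and at such a point feasibility makes the quadratic penalty and its gradient vanish.

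Next I would read the assertions directly off the first-order conditions at $(\vvx^*,\vvz^*,\vlambda^*)$. Primal feasibility splits as $\mA_s\vvx^*=\vvz^*$ and $\mA_d\vvx^*=\vvz^*$; subtracting yields $\mM\vvx^*=\mathbf 0$ and adding yields $\tfrac12\mM_\ts\vvx^*=\vvz^*$, the first two claims. Stationarity in $\vvz$ is $\mB^\tr\vlambda^*=-(\vlambda_1^*+\vlambda_2^*)=\mathbf 0$, which forces $\vlambda_2^*=-\vlambda_1^*$, so every optimal dual has the form $\vlambda^*=[\vmu^*;-\vmu^*]$ with $\vmu^*:=\vlambda_1^*$. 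Finally, stationarity in $\vvx$ reads $\nabla f(\vvx^*)+\mA^\tr\vlambda^*=\mathbf 0$, and substituting the split form gives $\mA^\tr\vlambda^*=(\mA_s-\mA_d)^\tr\vmu^*=\mM^\tr\vmu^*$; defining $\vphi^*:=\mM^\tr\vmu^*$ delivers \eqref{eq:opt-condition-phi*}.

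The one genuinely non-routine point is the existence and uniqueness of $\vmu^*$ in the column space of $\mM$. Starting from $\vmu=\vlambda_1^*$, I would use the orthogonal decomposition $\RR^{Ed}=\operatorname{range}(\mM)\oplus\ker(\mM^\tr)$ to write $\vlambda_1^*=\vmu_\parallel+\vmu_\perp$ with $\vmu_\parallel\in\operatorname{range}(\mM)$ and $\vmu_\perp\in\ker(\mM^\tr)$. Since $\mM^\tr\vmu_\perp=\mathbf 0$, the pair $[\vmu_\parallel;-\vmu_\parallel]$ satisfies the same two stationarity identities and is thus still an optimal dual, giving a representative $\vmu^*:=\vmu_\parallel$ in the column space of $\mM$ with $\mM^\tr\vmu^*=\vphi^*$; this is existence. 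Uniqueness follows because any two elements of $\operatorname{range}(\mM)$ mapping to the same $\vphi^*$ differ by an element of $\ker(\mM^\tr)\cap\operatorname{range}(\mM)=\{\mathbf 0\}$. Here Assumption~\ref{assump:graph} underlies the bookkeeping: connectivity makes the zero eigenvalue of $\mL$ simple, hence $\ker\mM=\ker\mL=\{\mathbf 1_n\otimes v\}$, and the consensus optimality $\sum_i\nabla f_i(\vx^*)=\mathbf 0$ guarantees $\nabla f(\vvx^*)\in(\ker\mM)^\perp=\operatorname{range}(\mM^\tr)$, consistent with \eqref{eq:opt-condition-phi*}. This range-space argument is the main obstacle; everything else is a direct transcription of the KKT conditions.
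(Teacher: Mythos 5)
Your proposal is correct and follows exactly the standard edge-variable KKT derivation that the paper itself relies on: the paper does not prove this lemma but imports it directly from the DADMM reference \cite{TSP-Shi-Ling-etc2014}, and your reconstruction (splitting feasibility into $\mA_s\vvx^*=\vvz^*$, $\mA_d\vvx^*=\vvz^*$, reading $\vlambda^*=[\vmu^*;-\vmu^*]$ off the $\vvz$-stationarity, and projecting $\vlambda_1^*$ onto $\operatorname{range}(\mM)$ for existence and uniqueness) is precisely that argument. The only point worth flagging is cosmetic: the range-space step you call the ``main obstacle'' is routine linear algebra once the orthogonal decomposition is written down, and the observation that $\nabla f(\vvx^*)\in\operatorname{range}(\mM^\tr)$ is already implied by assuming a dual optimal $\vlambda^*$ exists, so it serves as a consistency check rather than an extra hypothesis.
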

Next, we show the relationship between $\vr_k$ and $\vphi_k$. As $\vphi_0 = \bs{0}$, by recursive computation based on \eqref{eq:PC-DQM-Compact-phi}, we have
$\vphi_{k+1} = \vphi_0 + c \sum_{s=1}^{k+1} \mL \vvy_s = c \sum_{s=1}^{k+1} \mL \vvy_s$. 
As $\mL = \frac{1}{2} \mM^{\tr} \mM$, we further have
\begin{align}
	\vphi_{k+1} ={}& 2 c \mM^{\tr} \vr_{k+1},    \label{eq:phi-r}\\
	\vr_{k+1} ={}& \vr_k + \frac{1}{4} \mM \vvy_{k+1}. \label{eq:rk-rk-1}
\end{align}
Recalling Lemma \ref{lemma:optimal-condition}, by letting $\vr^* = \frac{1}{2c}\vmu^*$, we have $\vphi^* = 2c\mM^{\tr} \vr^*$. 
The remain task is to show the convergence of $(\vvx_k, \vr_k)$ to $(\vvx^*,\vr^*)$.

Define
\begin{align}
	V_k = \frac{c}{2}\mathbb{E}(\norm{\vvx_k - \vvx^*}^2_{\mL_{\ts}}) + 4c\mathbb{E}(\norm{\vr_k - \vr^*}^2)+r\mathbb{E}(\|\vve_k\|^2),    \label{eq:Vk-def}
\end{align}
where  $r$ is a positive constant, which will be determined later.
It is clear that the convergence of CC-DQM is equivalent to $V_k \to 0$ as $k\to \infty$.
Regarding the evolution of $V_k$. We have the following lemma, which is infrastructural for our main result.
\begin{lemma}\label{lemma:qk-estimate}
	Under Assumptions 1, 2, 3 if 
	$c$ and $\delta$ is chosen such that
	\begin{align} 
		\frac{\delta}{(1-\sqrt{\delta})^2}< \frac{G(\beta)}{3c\lambda_n+2c\beta\lambda_n+\frac{c\lambda_n^2}{\beta\lambda_2}},\label{eq:quantize-condition}
	\end{align}
	with $ G(\beta)>0$, $\beta>\frac{\ell^2}{2c\lambda_2v}$,
	where 
	\begin{align*}
		G(\beta)={	\frac{c \hat{\lambda}_1}{2}-\frac{4\ell^2c\beta\lambda_2}{4c\beta\lambda_2v-2\ell^2}-\frac{ (c^2 \hat{\lambda}_n^2 + 4\ell^2)}{c\beta\lambda_2 }},
	\end{align*}
	then there exists $r>0$, $\eta >\frac{c\lambda_2}{2c\lambda_2v-\beta^{-1}\ell^2}$ and $\hat{\sigma}>0$ such that the sequence $V_k$ generated by CC-DQM satisfies
	\begin{align*}
		V_{k+1} \leq \frac{1}{1+\hat{\sigma}} V_k + n\psi\mu_{k+1}^2,
	\end{align*}
	where $\psi = r+\frac{1}{1+\hat{\sigma}}\left(1.5c\lambda_n + 2c{\beta}\lambda_n + \frac{(c\beta^{-1}+4\hat{\sigma} c)\lambda_n^2}{2\lambda_2}\right)$,
	
	\begin{align}
		&\hat{\sigma}=\min\bigg\{\frac{1-\sqrt{\delta}}{\sqrt{\delta}+\frac{2c\lambda_n^2(1+\sqrt{\delta})}{r\lambda_2}}-\frac{\lambda_2(\tilde{\Xi}_1+\tilde{\Xi}_2-(1-\sqrt{\delta})\tilde{\Xi}_1)}{\sqrt{\delta}r\lambda_2+2c\lambda_n^2(1+\sqrt{\delta})}, \notag \\
		&\frac{c\lambda_2(c\hat{\lambda}_1-2r\frac{\delta}{1-\sqrt{\delta}}-2\tilde{\Xi}_1\frac{\delta}{1-\sqrt{\delta}}-4\eta\ell^2)-2(c^2\hat{\lambda}_n^2+4\ell^2)\beta^{-1}}{8c^2\hat{\lambda}_n^2+32\ell^2+4c^2\lambda_n^2+4c\lambda_2 r\frac{\delta}{1-\sqrt{\delta}}},\notag \\
		&\frac{c\lambda_2(2v-\eta) - \ell^2 \beta^{-1}}{c^2 \lambda_2 \hat{\lambda}_n + \ell^2}\bigg\}>0, \label{eq:delta-k}  \\
		&\frac{\tilde{\Xi}_2+\tilde{\Xi}_1\sqrt{\delta}}{1-\sqrt{\delta}}<r<\frac{c\hat{\lambda}_1-4\eta\ell^2}{2\frac{\delta}{1-\sqrt{\delta}}}-\frac{c^2\hat{\lambda}_n^2+4\ell^2}{c\lambda_2\frac{\delta}{1-\sqrt{\delta}}\beta}-\tilde{\Xi}_1,   \notag  \\
		&\tilde{\Xi}_1=\frac{3c\lambda_n}{2}+2c\beta\lambda_n+\frac{c\lambda_n^2}{2\beta\lambda_2},\qquad  \tilde{\Xi}_2=\frac{3c\lambda_n}{2}+\frac{c\lambda_n^2}{2\beta\lambda_2}. \notag 
	\end{align}
\end{lemma}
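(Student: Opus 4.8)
The plan is to establish the one-step contraction for the composite Lyapunov function $V_k$ of \eqref{eq:Vk-def}, treating its primal--dual ``core'' $\frac{c}{2}\mathbb{E}(\|\vvx_k-\vvx^*\|^2_{\mL_{\ts}})+4c\,\mathbb{E}(\|\vr_k-\vr^*\|^2)$ together with the error term $r\,\mathbb{E}(\|\vve_k\|^2)$, and then folding in the compression/event-triggering bound of Lemma \ref{lemma:error-eve-comp}. First I would pass to error coordinates. Substituting $\vvy_k=\vvx_k+\vve_k$, $\vphi_k=2c\mM^{\tr}\vr_k$ from \eqref{eq:phi-r}, and the optimality relations of Lemma \ref{lemma:optimal-condition} ($\mM\vvx^*=\mathbf{0}$, $\nabla f(\vvx^*)=-2c\mM^{\tr}\vr^*$) into \eqref{eq:PC-DQM-Compact-x} yields the residual identity
\[
\tilde{\mD}(\vvx_{k+1}-\vvx_k) = -\big[\nabla f(\vvx_k)-\nabla f(\vvx^*)\big] - 2c\mM^{\tr}(\vr_k-\vr^*) - c\mL(\vvx_k-\vvx^*) - c\mL\vve_k,
\]
while \eqref{eq:rk-rk-1} gives the dual identity $\mM(\vvx_{k+1}-\vvx^*)=4(\vr_{k+1}-\vr_k)-\mM\vve_{k+1}$. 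These two relations are the backbone of everything that follows.

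Second, I would form the descent by taking an appropriately scaled inner product of the residual identity against $\vvx_{k+1}-\vvx^*$. The dual cross term $c(\vr_k-\vr^*)^{\tr}\mM(\vvx_{k+1}-\vvx^*)$ is rewritten through the dual identity and the polarization identity $2a^{\tr}b=\|a+b\|^2-\|a\|^2-\|b\|^2$, which produces the telescoping pair $\mathbb{E}\|\vr_{k+1}-\vr^*\|^2-\mathbb{E}\|\vr_k-\vr^*\|^2$, a negative $\|\vr_{k+1}-\vr_k\|^2$ budget, and an $\mM\vve_{k+1}$ coupling. The Hessian-weighted term $(\vvx_{k+1}-\vvx^*)^{\tr}\tilde{\mD}(\vvx_{k+1}-\vvx_k)$ is split via $\vvx_{k+1}-\vvx^*=(\vvx_k-\vvx^*)+(\vvx_{k+1}-\vvx_k)$ and lower-bounded using $\tilde{\mD}\succeq c\mL_{\ts}$ (valid since $\mL\succeq 0$ and $\nabla^2 f\succ 0$ under Assumption \ref{assump:f}), creating the $\|\cdot\|^2_{\mL_{\ts}}$ telescoping part of $V_k$ together with a residual $\|\vvx_{k+1}-\vvx_k\|^2$. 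The gradient cross term $\langle\nabla f(\vvx_k)-\nabla f(\vvx^*),\,\vvx_{k+1}-\vvx^*\rangle$ is likewise split: the $\vvx_k-\vvx^*$ piece is bounded below by $v\|\vvx_k-\vvx^*\|^2$ (strong convexity) while the $\vvx_{k+1}-\vvx_k$ piece and all remaining couplings ($c\mL\vve_k$, $\mM\vve_{k+1}$, the gradient residual) are decoupled by Young's inequality with the free parameters $\beta,\eta$ and the $\ell$-Lipschitz bound. Invoking the spectral estimates $\lambda_2\|\cdot\|^2\le\|\cdot\|^2_{\mL}\le\lambda_n\|\cdot\|^2$ on the range of $\mM$ (where $\vr_k-\vr^*$ resides by the recursion \eqref{eq:rk-rk-1}, legitimizing the use of $\lambda_2$), $\hat{\lambda}_1\|\cdot\|^2\le\|\cdot\|^2_{\mL_{\ts}}\le\hat{\lambda}_n\|\cdot\|^2$, and $\|\mM\vx\|^2=2\|\vx\|^2_{\mL}$ converts the whole inequality into the five scalar quantities $\mathbb{E}\|\vvx_k-\vvx^*\|^2$, $\mathbb{E}\|\vr_k-\vr^*\|^2$, $\mathbb{E}\|\vve_k\|^2$, $\mathbb{E}\|\vvx_{k+1}-\vvx_k\|^2$, and $\mu_{k+1}^2$, giving a bound for the core of the form core$_{k+1}\le$ core$_k$ minus a positive combination of the core terms, plus multiples of $\mathbb{E}\|\vve_k\|^2$, $\mathbb{E}\|\vvx_{k+1}-\vvx_k\|^2$, and $\mu_{k+1}^2$.

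Third, I would close the loop by adding $r\,\mathbb{E}\|\vve_{k+1}\|^2$ and substituting Lemma \ref{lemma:error-eve-comp}, which replaces it by $r\sqrt{\delta}\,\mathbb{E}\|\vve_k\|^2+\frac{r\delta}{1-\sqrt{\delta}}\mathbb{E}\|\vvx_{k+1}-\vvx_k\|^2+rn\mu_{k+1}^2$. The decisive move is to pick $r$ so that the net coefficient of the uncontrolled term $\mathbb{E}\|\vvx_{k+1}-\vvx_k\|^2$ is non-positive, i.e. so that $\frac{r\delta}{1-\sqrt{\delta}}$ plus the Young remainders is dominated by the negative $\|\vvx_{k+1}-\vvx_k\|^2$ budget coming from $\tilde{\mD}\succeq c\mL_{\ts}$; this is exactly the admissible interval for $r$ stated in the lemma, and that interval is nonempty precisely when condition \eqref{eq:quantize-condition} holds (the stated upper bound on $r$ exceeds the lower bound iff $\frac{\delta}{(1-\sqrt{\delta})^2}$ lies below the graph-determined threshold $G(\beta)/(3c\lambda_n+2c\beta\lambda_n+c\lambda_n^2/(\beta\lambda_2))$). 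With $r$ so fixed, and $\eta,\beta$ in the stated ranges so that the three per-term coefficient ratios are strictly positive, taking $\hat{\sigma}$ to be their minimum converts the three negative core/error budgets into $-\hat{\sigma}\,V_{k+1}$ and delivers exactly $V_{k+1}\le\frac{1}{1+\hat{\sigma}}V_k+n\psi\mu_{k+1}^2$ with $\psi$ collecting the $r$ contribution and the $\frac{1}{1+\hat{\sigma}}$-weighted residuals.

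The \textbf{main obstacle} is the coefficient bookkeeping in the third step: one must simultaneously (i) absorb the $\mathbb{E}\|\vvx_{k+1}-\vvx_k\|^2$ produced by the compression recursion into the finite negative budget produced by the Hessian weight, which is the precise role of \eqref{eq:quantize-condition} and what keeps the $r$-interval nonempty, and (ii) verify that all three competing contraction rates (one for each summand of $V_k$) are strictly positive under $G(\beta)>0$ and $\beta>\frac{\ell^2}{2c\lambda_2 v}$, so that their minimum $\hat{\sigma}$ is a legitimate positive rate. Balancing the Young parameters $\beta,\eta$ (and the internal $t=\tfrac{1}{\sqrt{\delta}}-1$ already used in Lemma \ref{lemma:error-eve-comp}) so that no single budget is exhausted is the delicate calculation; the only genuinely structural point to check, rather than merely compute, is that $\vr_k-\vr^*$ stays in the range of $\mM$ so that the strictly positive eigenvalue $\lambda_2$ may legitimately be invoked.
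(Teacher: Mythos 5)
Your overall architecture matches the paper's: pair the update residual with $\vvx_{k+1}-\vvx^*$, telescope the $\mL_{\ts}$- and $\vr$-parts, decouple with the Young parameters $\beta,\eta$, fold in Lemma \ref{lemma:error-eve-comp}, and choose $r$ so that the net coefficient of $\mathbb{E}(\|\vvx_{k+1}-\vvx_k\|^2)$ is non-positive; you also correctly identify \eqref{eq:quantize-condition} as the nonemptiness condition for the $r$-interval and the range-of-$\mM$ point that legitimizes $\lambda_2$. However, there are two concrete gaps. First, your treatment of the Hessian-weighted term does not work as stated: $\tilde{\mD}=2c\mD+\nabla^2 f(\vvy_k)$ is time-varying, so neither the ordering $\tilde{\mD}\succeq c\mL_{\ts}$ (which controls quadratic forms, not the sign-indefinite cross term $(\vvx_k-\vvx^*)^{\tr}\tilde{\mD}(\vvx_{k+1}-\vvx_k)$) nor polarization in the $\tilde{\mD}$-weighted norm (which would ``telescope'' in a $k$-dependent norm) yields the required $\frac{c}{2}\big(\|\vvx_k-\vvx^*\|^2_{\mL_{\ts}}-\|\vvx_{k+1}-\vvx^*\|^2_{\mL_{\ts}}\big)$. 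The paper's device is to introduce the quadratic-approximation error $\vdelta_k=\nabla f(\vvx_k)+\nabla^2 f(\vvy_k)(\vvx_{k+1}-\vvx_k)-\nabla f(\vvx_{k+1})$, with $\|\vdelta_k\|\le 2\ell\|\vvx_{k+1}-\vvx_k\|$, which removes $\nabla^2 f(\vvy_k)$ entirely from the telescoping position and leaves only the constant matrices via $2\mD=\mL_{\ts}+\mL$; this is a missing idea, not a bookkeeping detail.

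Second, your plan never produces a strictly negative coefficient on $\mathbb{E}(\|\vr_{k+1}-\vr^*\|^2)$, which is indispensable for converting the descent into a geometric contraction of the $4c\,\mathbb{E}(\|\vr_{k+1}-\vr^*\|^2)$ part of $V_{k+1}$. The dual cross term only yields telescoping plus a negative $\|\vr_{k+1}-\vr_k\|^2$ budget, and the Young step decoupling $\vve_{k+1}$ from $\mM^{\tr}(\vr_{k+1}-\vr^*)$ in fact \emph{adds} $+c\beta^{-1}\mathbb{E}(\|\vr_{k+1}-\vr^*\|^2)$. The paper closes this with a separate estimate: writing the optimality residual as $c\mL(\vve_{k+1}-\vve_k)-2c\mM^{\tr}(\vr_{k+1}-\vr^*)$ and applying $\|\vx+\vy\|^2\ge\frac12\|\vy\|^2-\|\vx\|^2$ together with $\lambda_2$ on the range of $\mM$ to obtain \eqref{eq:rk-rstar-bound}, which trades $\|\vr_{k+1}-\vr^*\|^2$ for $\|\vvx_{k+1}-\vvx_k\|^2$, $\|\vvx_{k+1}-\vvx^*\|^2$ and the error terms. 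Relatedly, you apply strong convexity at $(\vvx_k,\vvx^*)$, producing $-v\|\vvx_k-\vvx^*\|^2$, whereas the contraction requires $-v\|\vvx_{k+1}-\vvx^*\|^2$ to absorb the $\hat{\sigma}$-inflation of $\tilde V_{k+1}$ and of the traded terms; without these two steps the coefficient matching that defines $\hat{\sigma}$, the admissible $r$-interval and $\psi$ in the statement cannot be carried out.
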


\begin{proof}
	Before proceeding, inspired by \cite{TSP-Mokhtari-Shi-etc2016}, we 
	define the approximated error  on  $\nabla f(\vvx_{k+1})$ as
	$\vdelta_k = \nabla f(\vvx_k) + \nabla^2 f(\vvy_k)(\vvx_{k+1} - \vvx_k) - \nabla f(\vvx_{k+1})$.
	Sine $f$ satisfies $\ell$ smooth, we
	can know  $\|\vdelta_k\|\leq \gamma\|\vvx_{k+1}-\vvx_{k}\|$ with $\gamma=2\ell$.
	Next, we begin to give our proof.
	Denote $\nabla^2 f(\vvy_k)$ as $\tilde{\mH}_k$.
	According to  \eqref{eq:PC-DQM-Compact-x}, we can obtain that
	\begin{align}
	{}&\hspace*{-2mm}	\nabla f(\vvx_{k+1}) \notag  \\
		={} & \nabla f(\vvx_k) + \tilde{\mH}_k(\vvx_{k+1} - \vvx_k) - \vdelta_k  \notag \\
		={} & \tilde{\mD}(\vvx_k -\vvx_{k+1}) - \vphi_k - c \mL \vvy_k + \tilde{\mH}_k(\vvx_{k+1} - \vvx_k) - \vdelta_k  \notag \\
		={} & 2 c \mD (\vvx_k -\vvx_{k+1}) - \vphi_k - c \mL (\vvx_k + \vve_k) - \vdelta_k \notag \\
		={} & c \mL_{\ts} (\vvx_k - \vvx_{k+1}) - c \mL \vvx_{k+1} - \vphi_k - c \mL \vve_k - \vdelta_k  \notag \\
		={} & c \mL_{\ts} (\vvx_k - \vvx_{k+1}) - c \mL (\vvy_{k+1} - \vve_{k+1}) - \vphi_k   
		 - c \mL \vve_k - \vdelta_k \notag \\
		={} & c \mL_{\ts} (\vvx_k - \vvx_{k+1}) - \vphi_{k+1} + c \mL (\vve_{k+1} - \vve_k) - \vdelta_k,  \label{eq:grad-f}
	\end{align}
	where the third equality utilizes $\tilde{\mD} - \tilde{\mH}_k = 2c \mD$ and $\vvy_k = \vvx_k + \vve_k$, the fourth equality utilizes $2\mD = \mL_{\ts} + \mL$.  By noting that $\nabla f(\vvx^*) = -\vphi^*$, we have
	\begin{align}
		&\hspace*{-2mm}   \mathbb{E}\big((\vvx_{k+1}-\vvx^{\ast})^{\tr}(\nabla f(\vvx_{k+1}) - \nabla f(\vvx^{\ast}))\big)   \notag  \\
		={} & \underbrace{c  \mathbb{E}\big((\vvx_{k+1}-\vvx^{\ast})^{\tr} \mL_{\ts} (\vvx_k - \vvx_{k+1})}_{\Xi_1} \big)    \notag  \\
		{}&	+ \underbrace{ \mathbb{E}\big((\vvx_{k+1}-\vvx^{\ast})^{\tr} ( \vphi^* - \vphi_{k+1})\big) }_{\Xi_2}+ \underbrace{\mathbb{E}\big( (\vvx^{\ast} - \vvx_{k+1})^{\tr} \vdelta_k\big)}_{\Xi_4}    \notag \\
		& + \underbrace{ c \mathbb{E}\big((\vvx_{k+1}-\vvx^{\ast})^{\tr} \mL (\vve_{k+1}-\vve_{k})\big)}_{\Xi_3}.  \label{eq:gradf-x}
	\end{align}
	Next, we further estimate the above four terms. Regarding $\Xi_1$, by using $2 \vx^{\tr} \mA \vy = \|\vx+\vy\|_{\mA}^2 -\|\vx\|_{\mA}^2-\|\vy\|_{\mA}^2$, we have
	\begin{align*}
		&\hspace*{-2mm}	\Xi_1 = \\
		{}& \frac{c}{2} \bigg(\mathbb{E}(\norm{\vvx_k - \vvx^*}_{\mL_{\ts}}^2) -\mathbb{E}(\norm{\vvx_{k+1} - \vvx^*}_{\mL_{\ts}}^2)- \mathbb{E}(\norm{\vvx_k - \vvx_{k+1}}_{\mL_{\ts}}^2) \bigg).
	\end{align*}
	Regarding $\Xi_2$, as $\vphi_{k+1} - \vphi^* = 2c \mM^{\tr}(\vr_{k+1} - \vr^*)$, $\mM \vvx^* = \bs{0}$, and $\vr_{k+1} - \vr_k = \frac{1}{4}\mM \vvy_{k+1}$, one has
	\begin{align*}
		\Xi_2 ={}&  - 2 c\mathbb{E}\big( \vvx_{k+1}^{\tr} \mM^{\tr} (\vr_{k+1} - \vr^*)\big) \notag \\
		={}&- 2 c\mathbb{E} \big((\vvy_{k+1} - \vve_{k+1})^{\tr} \mM^{\tr} (\vr_{k+1} - \vr^*)\big)  \notag \\
		={}&  4 c \mathbb{E}\bigg(\norm{\vr_k - \vr^*}^2 - \norm{\vr_{k+1} - \vr^*}^2 - \norm{\vr_k - \vr_{k+1}}^2 \bigg)  \notag \\
		{}& + 2 c \mathbb{E}\big(\vve_{k+1}^{\tr} \mM^{\tr}(\vr_{k+1} - \vr^*)\big) \notag \\
		\leq {}& 4 c\mathbb{E} \bigg(\norm{\vr_k - \vr^*}^2 - \norm{\vr_{k+1} - \vr^*}^2 - \norm{\vr_k - \vr_{k+1}}^2 \bigg)  \notag \\
		{}&	+ 2 c \beta \mathbb{E}(\vve_{k+1}^{\tr} \mL \vve_{k+1}) + c \beta^{-1}\mathbb{E} (\norm{\vr_{k+1} - \vr^*}^2). 
	\end{align*}
	Regarding $\Xi_3$, we have
	\begin{align*}
		\Xi_3 = {}& \frac{c}{2} \mathbb{E}\big(\vvx_{k+1}^{\tr} \mM^{\tr} \mM (\vve_{k+1}-\vve_{k})\big)   \notag \\
		={}&\frac{c}{2} \mathbb{E}\big( (\vvy_{k+1} - \vve_{k+1})^{\tr} \mM^{\tr} \mM (\vve_{k+1}-\vve_{k})\big)   \notag \\
		={}& 2 c\mathbb{E} \big((\vr_{k+1} - \vr_k)^{\tr} \mM (\vve_{k+1} - \vve_k)\big) + c \mathbb{E}\big(\vve_{k+1}^{\tr} \mL (\vve_k - \vve_{k+1} )\big) \notag \\
		\leq {}& \frac{c}{2}\mathbb{E}\big((\vve_{k+1} - \vve_k)^{\tr} \mL (\vve_{k+1} - \vve_k)\big) + 4 c\mathbb{E}\big( \norm{\vr_{k+1} - \vr_k}^2\big) \notag \\
		&+ c \mathbb{E}\big(\vve^{\tr}_{k+1} \mL \vve_k\big).
	\end{align*}
	Regarding $\Xi_4$, we have
	\begin{align*}
		\Xi_4 &\leq {}   \frac{\eta}{2}\mathbb{E}( \norm{\vdelta_k}^2) + \frac{1}{2 \eta} \mathbb{E}(\norm{\vvx_{k+1} - \vvx^*}^2) \notag \\  
		&\leq {} \frac{\eta \gamma^2}{2}\mathbb{E}(\norm{ \vvx_{k+1} - \vvx_k }^2) +\frac{1}{2 \eta}\mathbb{E}(\norm{\vvx_{k+1} - \vvx^*}^2).
	\end{align*}
	Define 
	$$\tilde{V}_{k}=\frac{c}{2}\mathbb{E}(\norm{\vvx_k - \vvx^*}^2_{\mL_{\ts}}) + 4c\mathbb{E}(\norm{\vr_k - \vr^*}^2).$$
	Due to the $v$-strongly convexity of $f$, we have
	$(\vvx_{k+1}-\vvx^{\ast})^{\tr}(\nabla f(\vvx_{k+1}) - \nabla f(\vvx^{\ast})) \ge v \norm{\vvx_{k+1} - \vvx^*}^2$, which yields
	\begin{align}
		&\hspace*{-3mm}		v\mathbb{E}(\norm{\vvx_{k+1} - \vvx^*}^2)  \notag  \\
		\leq {}& \Xi_1 + \Xi_2 + \Xi_3 + \Xi_4    \notag  \\
		\leq {}& \tilde{V}_k - \tilde{V}_{k+1} +  \bigg(\frac{\eta \gamma^2}{2} - \frac{c \hat{\lambda}_1 }{2} \bigg)\mathbb{E}(\norm{\vvx_{k+1} - \vvx_k }^2)  
		\notag \\
		& +  c \beta^{-1}\mathbb{E}(\norm{\vr_{k+1} - \vr^*}^2)  + {c \lambda_n (2\beta + 1)}\mathbb{E}(\norm{\vve_{k+1}}^2) \notag \\
		&	+ { c \lambda_n}\mathbb{E}(\norm{\vve_k}^2) 
		+ c \lambda_n\mathbb{E}(\|\vve_{k}\|\|\vve_{k+1}\|)+ \frac{1}{2 \eta}\mathbb{E}(\norm{\vvx_{k+1} - \vvx^*}^2)  \notag\\
		\leq{} & \tilde{V}_k - (1+\hat{\sigma}) \tilde{V}_{k+1} +  \bigg(\frac{\eta \gamma^2}{2} - \frac{c \hat{\lambda}_1 }{2} \bigg)\mathbb{E}(\norm{\vvx_{k+1} - \vvx_k }^2)  \notag \\
		&+ \bigg( \frac{1}{2 \eta} + \frac{c \hat{\sigma} \hat{\lambda}_n}{2} \bigg)\mathbb{E}(\norm{\vvx_{k+1} - \vvx^*}^2)  \notag \\
		& +  \bigg( c \beta^{-1} + 4 c \hat{\sigma} \bigg)\mathbb{E}(\norm{\vr_{k+1} - \vr^*}^2) \notag \\
		&	+ \frac{c \lambda_n (4\beta + 3)}{2}\mathbb{E}\bigg(\norm{\vve_{k+1}}^2) 
		+\norm{\vve_k}^2\bigg). \label{eq:vk-vk-1}
	\end{align}
	
	Next, we consider the term $\norm{\vr_{k+1} - \vr^*}^2$. Recalling \eqref{eq:grad-f}, we have
	\begin{align}
		& \hspace*{-5mm} \nabla f(\vvx_{k+1}) - \nabla f(\vvx^*) +c \mL_{\ts} (\vvx_{k+1} - \vvx_{k}) + \vdelta_k \notag \\
		= {}&   c \mL ( \vve_{k+1} - \vve_{k} ) - 2c \mM^{\tr}(\vr_{k+1} - \vr^*)  \label{eq:r-rstar}
	\end{align}
	Denote the left-side and right-side of equation \eqref{eq:r-rstar} as $\Xi_L$ and $\Xi_R$, respectively. 
	By applying $\|\vx + \vy\|^2  \geq \frac{1}{2} \|\vy\|^2 - \|\vx\|^2$ to $\norm{\Xi_R}^2$, we obtain 
	\begin{align}
		\norm{\Xi_R}^2 \geq{}&  \frac{1}{2} \norm{2c\mM^{\tr} (\vr_{k+1} - \vr^{\ast})}^2 - c^2 \norm{ \mL (\ve_{k+1}-\ve_{k})}^2  \notag \\
		\geq{}& 4 c^2 \lambda_2 \|\vr_{k+1} - \vr^{\ast}\|^2 - 2 c^2 \lambda_n^2(\norm{\ve_{k}}^2 + \norm{\ve_{k+1}}^2).  \notag
	\end{align}
	Regarding $\norm{\Xi_L}^2$, we have
	\begin{align}	
		\norm{\Xi_L}^2 \leq{}& 2 \ell^2 \|\vvx_{k+1} - \vvx^{\ast}\|^2 + 4 \Big(c^2 \hat{\lambda}_n^2 + \gamma^2 \Big) \|\vvx_{k+1} - \vvx_k\|^2.  \notag
	\end{align}
	Combining estimation on  $\norm{\Xi_R}^2$ and $\norm{\Xi_L}^2$ yields
	\begin{align}
		\mathbb{E}(\norm{\vr_{k+1} - \vr^*}^2)
		\leq {}& \frac{c^2 \hat{\lambda}_n^2 + \gamma^2}{c^2 \lambda_2} \mathbb{E}(\|\vvx_{k+1} - \vvx_k\|^2)  \notag \\
		{}&+ \frac{\lambda_n^2}{2 \lambda_2}\mathbb{E}((\norm{\vve_{k}}^2 + \norm{\vve_{k+1}}^2)) \notag \\
		{}&	+ \frac{\ell^2}{2 c^2 \lambda_2 }\mathbb{E}(\|\vvx_{k+1} - \vx^{\ast}\|^2). \label{eq:rk-rstar-bound}
	\end{align}	
	Substituting \eqref{eq:rk-rstar-bound} into \eqref{eq:vk-vk-1}, we have
	\begin{align}
		& \hspace*{-5mm} (1+ \hat{\sigma}) \tilde{V}_{k+1} -\tilde{V}_k \notag \\
		\leq{} &  \bigg( \frac{(  \beta^{-1} + 4  \hat{\sigma} ) (c^2 \hat{\lambda}_n^2 + \gamma^2)}{c \lambda_2} - \frac{c \hat{\lambda}_1 - \eta \gamma^2 }{2} \bigg)\mathbb{E}(\norm{\vvx_{k+1} - \vvx_k }^2) \notag \\
		& + \bigg( \frac{1}{2 \eta} + \frac{c \hat{\sigma} \hat{\lambda}_n}{2} + \frac{\ell^2 ( \beta^{-1} + 4  \hat{\sigma})}{2 c \lambda_2 } - v\bigg)\mathbb{E}(\norm{\vvx_{k+1} - \vvx^*}^2)  \notag \\
		&+\underbrace{\big(2\beta c\lambda_n+\frac{3c \lambda_n }{2} + \frac{(1 + 4 \hat{\sigma}\beta) c \lambda_n^2}{2 \lambda_2 \beta})}_{\Xi_1}\mathbb{E}(\|\vve_{k+1}\|^2) \notag \\ 
		&+ \underbrace{\bigg(\frac{3c \lambda_n }{2} + \frac{(1 + 4 \hat{\sigma}\beta) c \lambda_n^2}{2 \lambda_2 \beta}\bigg)}_{\Xi_2}\mathbb{E}(\norm{\vve_{k}}^2).  \label{eq:tilde-V_K-estimate}
	\end{align}
	According to the definition of $V_k$, we can know
	\begin{align}
		V_k-(1+\hat{\sigma})V_{k+1}=&\tilde{V}_k+r\mathbb{E}(\|\vve_{k}\|^2)-(1+\hat{\sigma})\tilde{V}_{k+1} \notag \\
		&-(1+\hat{\sigma})r\mathbb{E}(\|\vve_{k+1}\|^2). \label{eq:V_K-def}
	\end{align}
	In Lemma \ref{lemma:error-eve-comp}, the relationship between $\mathbb{E}(\|\vve_{k+1}\|^2)$ and $\mathbb{E}(\|\vve_{k}\|^2)$ is estimated, which can be seen in \eqref{eq:error-eve-comp}.  To estimate $V_k$, we substitute
	\eqref{eq:error-eve-comp} and \eqref{eq:tilde-V_K-estimate} into \eqref{eq:V_K-def}, thus obtaining
	\begin{align}
		& \hspace*{-8mm} (1+ \hat{\sigma}) {V}_{k+1} -{V}_k \notag \\
		\leq{}   \bigg(&  - \frac{c \hat{\lambda}_1 - \eta \gamma^2 }{2}+\frac{(  \beta^{-1} + 4  \hat{\sigma} ) (c^2 \hat{\lambda}_n^2 + \gamma^2)}{c \lambda_2}\notag\\
		&	+\frac{r(1+\hat{\sigma})\delta}{1-\sqrt{\delta}}+\Xi_1\frac{\delta}{1-\sqrt{\delta}} \bigg)\mathbb{E}(\norm{\vvx_{k+1} - \vvx_k }^2)  \notag \\
		&	+ \bigg( \frac{1}{2 \eta} + \frac{c \hat{\sigma} \hat{\lambda}_n}{2} + \frac{\ell^2 ( \beta^{-1} + 4  \hat{\sigma})}{2 c \lambda_2 } - v\bigg)\mathbb{E}(\norm{\vvx_{k+1} - \vvx^*}^2) \notag \\
		&  + \bigg(r(1+\hat{\sigma})\sqrt{\delta}+\Xi_1\sqrt{\delta}-r+\Xi_2 \bigg)\mathbb{E}(\norm{\vve_{k}}^2) \notag \\
		&	+ \bigg(r(1+\hat{\sigma})+\Xi_1\bigg)n\mu_{k+1}^2.	\label{eq:real-V_k-estiamte}
	\end{align}
	To obtain the result, the coefficients associated with the $\mathbb{E}(\|\vvx_{k+1}\|^2)$, $\mathbb{E}(\|\vvx_{k}\|^2)$ and $\mathbb{E}(\|\vve_{k+1}\|^2)$ in \eqref{eq:real-V_k-estiamte} are required 
	to be negative. That is
	\begin{align*}
		r-\Xi_2-\Xi_1\sqrt{\delta}-(1+\hat{\sigma})r\sqrt{\delta}&\geq 0{}, \\
		v - \frac{1}{2\eta} - \frac{\hat{\sigma} c \hat{\lambda}_n}{2} - \frac{\ell^2 (\beta^{-1}+ 4 \hat{\sigma} )}{2 c \lambda_2 } \geq{} 0,
	\end{align*}
	\begin{align*}
		\frac{c \hat{\lambda}_1-\eta\gamma^2}{2} 
		-\frac{(r+\Xi_1)\delta}{1-\sqrt{\delta}}-\frac{r\delta\hat{\sigma}}{1-\sqrt{\delta}} \\
		-\frac{ (c^2 \hat{\lambda}_n^2 + \gamma^2 ) (\beta^{-1}+ 4\hat{\sigma}  )}{c \lambda_2 } \geq{} 0.
	\end{align*}
	Define 
	\begin{align*}
		\tilde{\Xi}_1=\Xi_1|_{\hat{\sigma}=0}=\frac{3c\lambda_n}{2}+{2c\beta\lambda_n}+\frac{c\beta^{-1}\lambda_n^2}{2\lambda_2},\\
		\tilde{\Xi}_2=\Xi_2|_{\hat{\sigma}=0}=\frac{3c\lambda_n}{2}+\frac{c\beta^{-1}\lambda_n^2}{2\lambda_2}.
	\end{align*}
	As $\hat{\sigma}$ can be chosen as  a sufficiently small positive real number,
	it suffice to require
	\begin{align}
		r-\tilde{\Xi}_2-\tilde{\Xi}_1\sqrt{\delta}-r\sqrt{\delta}> 0{}, \label{eq:r-xishu-lower-bound}\\
		\frac{c \hat{\lambda}_1-\eta\gamma^2}{2} 
		-\frac{(r+\tilde{\Xi}_1)\delta}{1-\sqrt{\delta}}-\frac{ (c^2 \hat{\lambda}_n^2 + \gamma^2 )\beta^{-1}}{c \lambda_2 } >{} 0, \label{eq:w-range}\\
		v - \frac{1}{2\eta}  - \frac{\ell^2 \beta^{-1}}{2 c \lambda_2 } >{} 0\Rightarrow \eta >\frac{c\lambda_2}{2c\lambda_2v-\beta^{-1}\ell^2} .\notag
	\end{align}
	Regarding \eqref{eq:r-xishu-lower-bound}, we can get 
	$$ r>\frac{\tilde{\Xi}_2+\tilde{\Xi}_1\sqrt{\delta}}{1-\sqrt{\delta}}.$$ 
	To ensure \eqref{eq:w-range} can be satisfied, in \eqref{eq:w-range}, let $ r=\frac{\tilde{\Xi}_2+\tilde{\Xi}_1\sqrt{\delta}}{1-\sqrt{\delta}}$ and $\eta =\frac{c\lambda_2}{2c\lambda_2v-\beta^{-1}\ell^2}$, then we can obtain
	\begin{align} \label{eq:w-pre}
		\frac{c \hat{\lambda}_1}{2}-\frac{(\tilde{\Xi}_1+\tilde{\Xi}_2)\delta}{(1-\sqrt{\delta})^2}-\frac{c\lambda_2\gamma^2}{4c\lambda_2v-2\beta^{-1}\ell^2}
		-\frac{ (c^2 \hat{\lambda}_n^2 + \gamma^2 )}{c\beta \lambda_2 }>0
	\end{align}
	Rearrange the terms, then we can obtain:
	\begin{align} \label{eq:delta-range}
		\frac{\delta}{(1-\sqrt{\delta})^2}< \frac{	\frac{c \hat{\lambda}_1}{2}-\frac{c\lambda_2\gamma^2\beta}{4c\beta\lambda_2v-2\ell^2}-\frac{ (c^2 \hat{\lambda}_n^2 + \gamma^2 )}{c\beta\lambda_2 }}{3c\lambda_n+2c\beta\lambda_n+\frac{c\lambda_n^2}{\beta\lambda_2}}
	\end{align}
	Define the RHS of \eqref{eq:delta-range} as 
	$$F(c,\beta)=\frac{	\frac{\hat{\lambda}_1}{2}-\frac{\lambda_2\gamma^2\beta}{4c\beta\lambda_2v-2\ell^2}-\frac{ (c^2 \hat{\lambda}_n^2 + \gamma^2 )}{c^2\beta\lambda_2 }}{3\lambda_n+2\beta\lambda_n+\frac{\lambda_n^2}{\beta\lambda_2}}.$$
	It is easy to obtain 
	$$F(c,\beta)<F(\infty,\beta)=\frac{\frac{\hat{\lambda}_1}{2}-\frac{\hat{\lambda}_n^2}{\beta\lambda_2}}{3\lambda_n+2\beta\lambda_n+\frac{\lambda_n^2}{\beta\lambda_2}}.$$
	We find that $F(\infty,\beta)$ has a global maximum when $\beta=\beta^*=2u+\sqrt{4u^2+\frac{1}{2}\frac{\lambda_n}{\lambda_2}+3u}$, where $u=\frac{\hat{\lambda}_n^2}{\lambda_2\hat{\lambda}_1}$.
	So when $\delta$ is chosen such that $\frac{\delta}{(1-\sqrt{\delta})^2}<F(\infty,\beta^*)$, then there always exist a sufficient large $c$ which can ensure that \eqref{eq:delta-range} is satisfied.
\end{proof}
\section{Proof of Theorem 1 and Theorem 2}
\begin{proof}
	According to Lemma \eqref{lemma:qk-estimate}, we can know
	\begin{align*}
		&	V_{k+1} \leq \frac{1}{1+\hat{\sigma}} V_k + n\psi \mu_{k+1}^2,\text{with}\notag \\
		&  \psi = r+\frac{1}{1+\hat{\sigma}}\left(1.5c\lambda_n + 2c{\beta}\lambda_n + \frac{(c\beta^{-1}+4\hat{\sigma} c)\lambda_n^2}{2\lambda_2}\right).
	\end{align*}
	When $\mu_k=0$, then we can obtain:
	\begin{align}
		V_{k+1} \leq \frac{1}{1+\hat{\sigma}} V_k \leq  \frac{1}{(1+\hat{\sigma})^2} V_{k-1}\dots\leq \frac{V_0}{(1+\hat{\sigma})^{k+1}}.
	\end{align}
	Define $\sigma=\frac{1}{1+\hat{\sigma}}$, then the proof of Theorem 1 is completed.
	
	When $\mu_k=\alpha \rho^{k-1}$,  we can get obtain
	\begin{align}
		V_{k+1} &\leq \frac{V_k}{1+\hat{\sigma}}  + n\psi \alpha \mu_{k+1}^{2}\\
		& \leq \frac{V_{k-1}}{(1+\hat{\sigma})^2}+\frac{n\psi\alpha\mu_k^2}{1+\hat{\sigma}}+n\psi\alpha\mu_{k+1}^2\leq\dots \notag \\
		&	\leq V_0\sigma^{k+1}+\sum_{t=0}^{k}{n\psi\alpha\rho^{2(k-t)}}\sigma^t \notag \\
		&{}=\sigma^{k+1}\big(V_0+n\psi\alpha\sigma^{-1}\sum_{t=0}^{k-1}(\frac{\rho^2}{\sigma})^{t}\big)  \label{eq:before-proof-the2}
	\end{align}
	Let $\tilde{\sigma}=\max(\sigma,\rho^2)$, according to \eqref{eq:before-proof-the2},  when $\tilde{\sigma}\neq \rho^2$, we can know
	\begin{align}
		V_{k+1}\leq & \tilde{\sigma}^{k+1}\big(V_0+n\psi\alpha\tilde{\sigma}^{-1})\sum_{t=0}^{k}(\frac{\rho^2}{\tilde{\sigma}})^{t}\big) \notag \\
		\leq & \tilde{\sigma}^{k+1}\big(V_0+n\psi\alpha\tilde{\sigma}^{-1})\frac{\tilde{\sigma}}{\tilde{\sigma}-\rho^2}.
	\end{align}
	When $\tilde{\sigma}=\rho^2$, we can get $V_{k+1}\leq k\tilde{\sigma}^{k+1}(V_0+n\psi\alpha\tilde{\sigma}^{-1})$. 
\end{proof}
\section{Proof of Corollary 1}
\begin{proof}
	Revisiting \eqref{eq:gradf-x}, since the compressor is unbiased, then we can obtain
	\begin{align*}
		\Xi_3={}&c \mathbb{E}\big((\vvx_{k+1}-\vvx^{\ast})^{\tr} \mL (\vve_{k+1}-\vve_{k})\big)=-c\mathbb{E}(\vve_{k}^{\tr}\mL\vvx_{k+1}), \\ 
		={}& c\mathbb{E}\big((\vvy_{k+1}-\vve_{k+1})^{\tr}\mL\vve_{k}\big)=c\mathbb{E}\big((\vvy_{k+1})^{\tr}\mL\vve_{k}\big)\\
		={}& 2c\mathbb{E}\big((\vr_{k+1}-\vr_k)^{\tr}\mM\vve_{k}\big), \\ 
		\leq{}& 4c\mathbb{E}(\|\vr_{k+1}-\vr_k\|^2)+\frac{c\lambda_n}{2}\mathbb{E}(\|\vve_{k}\|^2) \\
		\Xi_2 = {}& -2 c\mathbb{E}\big( \vvx_{k+1}^{\tr} \mM^{\tr} (\vr_{k+1} - \vr^*)\big) \notag \\
		={}&-2c\mathbb{E} \big((\vvy_{k+1} - \vve_{k+1})^{\tr} \mM^{\tr} (\vr_{k+1} - \vr^*)\big)  \notag \\
		={}&4 c \mathbb{E}\bigg(\norm{\vr_k - \vr^*}^2 - \norm{\vr_{k+1} - \vr^*}^2 - \norm{\vr_k - \vr_{k+1}}^2 \bigg)\\
		{}&+c\mathbb{E}(\|\vve_{k+1}\|_{\mL}^2)\\
		\leq{}&  4 c \mathbb{E}\bigg(\norm{\vr_k - \vr^*}^2 - \norm{\vr_{k+1} - \vr^*}^2 - \norm{\vr_k - \vr_{k+1}}^2 \bigg)\\
		&+{}c\lambda_n\mathbb{E}(\|\vve_{k+1}\|^2).
	\end{align*}
	By reusing the strong convexity of $f$ as \eqref{eq:vk-vk-1}, we can know:
	\begin{align}
		& \hspace*{-5mm} v\mathbb{E}(\norm{\vvx_{k+1} - \vvx^*}^2)\notag \\
		\leq {}& \Xi_1 + \Xi_2 + \Xi_3 + \Xi_4    \notag  \\
		\leq {}& \tilde{V}_k - \tilde{V}_{k+1}  +  \bigg(\frac{\eta \gamma^2}{2} - \frac{c \hat{\lambda}_1 }{2} \bigg)\mathbb{E}(\norm{\vvx_{k+1} - \vvx_k }^2)    \notag \\ 
		{}&+\frac{1}{2 \eta}\mathbb{E}(\norm{\vvx_{k+1} - \vvx^*}^2) \label{eq:estimate-tilde-Vk}
	\end{align}
	
	According to the definition of $V_{k+1}$, we can obtain:
	\begin{align}
		& \hspace*{-3mm}	\tilde{V}_k-\tilde{V}_{k+1} \notag \\
		={}&V_k-(1+\sigma)V_{k+1}+\frac{c\sigma\hat{\lambda}_n}{2}\mathbb{E}(\|\vvx_{k+1}-\vvx^*\|^2)\notag \\
		{}&-r\mathbb{E}(\|\vve_{k}\|^2) +4c\sigma\mathbb{E}{\|\vr_{k+1}-\vr^*\|^2}
		+(1+\sigma)r\mathbb{E}(\|\vve_{k+1}\|^2)\notag \\
		\leq & V_k-(1+\sigma)V_{k+1}+\frac{c\sigma}{2}\hat{\lambda}_n\mathbb{E}(\|\vvx_{k+1}-\vvx^*\|^2) 
		\notag \\
		&+\frac{(1+\sigma)r\delta}{1-\sqrt{\delta}}\mathbb{E}(\|\vvx_{k+1}-\vvx_{k}\|^2)+r\big((1+\sigma)\sqrt{\delta}-1\big)\mathbb{E}(\|\vve_{k}\|^2) \notag \\
		&+4c\sigma\mathbb{E}{\|\vr_{k+1}-\vr^*\|^2} \notag \\
		\overset{\eqref{eq:rk-rstar-bound}}{\leq} & V_k-(1+\sigma)V_{k+1}+(\frac{c\sigma\hat{\lambda}_n}{2}+\frac{2\ell^2\sigma}{c\lambda_2})\mathbb{E}(\|\vvx_{k+1}-\vvx^*\|^2)\notag \\
		&+\left.\bigg(\frac{4c^2\sigma\hat{\lambda}_n^2}{c\lambda_2}+\frac{4\sigma\gamma^2}{c\lambda_2}+\frac{2\delta\lambda_n^2c\sigma}{(1-\sqrt{\delta})\lambda_2}
		+\frac{(1+\sigma)r\delta}{(1-\sqrt{\delta})} \right. \notag \\ 
		&\left.+\frac{\delta c \lambda_n}{1-\sqrt{\delta}}\right.\bigg)\mathbb{E}(\|\vvx_{k+1}-\vvx_k\|^2)	+\big(r(1+\sigma)\sqrt{\delta}-r\big)\mathbb{E}(\|\vve_{k}\|^2)\notag \\
		&+\frac{2\lambda_n^2c\sigma(1+\sqrt{\delta})}{\lambda_2}\mathbb{E}(\|\vve_{k}\|^2)+(\sqrt{\delta}c\lambda_n+\frac{c\lambda_n}{2})\mathbb{E}(\|\vve_{k}\|^2)
		\label{eq:pre-estimate-Vk}
	\end{align}
	Substitute \eqref{eq:pre-estimate-Vk} into \eqref{eq:estimate-tilde-Vk} and rearrange the terms, then we can get:
	\begin{align}
		&\hspace{-5mm}V_k-(1+\sigma)V_{k+1}\notag \\
		\geq&
		\bigg(\frac{c\hat{\lambda}_1}{2}-\frac{\eta\gamma^2}{2}-\frac{2\delta\lambda_n^2c\sigma}
		{(1-\sqrt{\delta})\lambda_2} -\frac{4c^2\sigma\hat{\lambda}_n^2+4\sigma\gamma^2}{c\lambda_2}\notag \\
		&\left.-\frac{(1+\sigma)r\delta}{1-\sqrt{\delta}}-\frac{\delta c \lambda_n}{1-\sqrt{\delta}}\bigg)\right.\mathbb{E}(\|\vvx_{k+1}-\vvx_k\|^2)\notag \\
		&+\bigg(r-r(1+\sigma)\sqrt{\delta}-\frac{2\lambda_n^2c\sigma(1+\sqrt{\delta})}{\lambda_2}\notag \\
		&-\sqrt{\delta}c\lambda_n-\frac{c\lambda_n}{2}\bigg)\mathbb{E}(\|\vve_{k}\|^2)  \notag\\ 
		&+\bigg(v-\frac{c\sigma\hat{\lambda}_n}{2}-\frac{2\ell^2\sigma}{c\lambda_2}-\frac{1}{2\eta}\bigg)\mathbb{E}(\|\vvx_{k+1}-\vvx^*\|^2) 
		\label{eq:V_k-linear}
	\end{align}
	To ensure the linear convergence of $V_k$, let the RHS of \eqref{eq:V_k-linear} be greater than $0$, that is, the coefficient of each term should always 
	be positive, which yields  
	\begin{align}
		&	v-\frac{c\sigma\hat{\lambda}_n}{2}-\frac{2\ell^2\sigma}{c\lambda_2}-\frac{1}{2\eta}\geq 0, \\[2mm]	
		&r-r(1+\sigma)\sqrt{\delta}-\frac{2\lambda_n^2c\sigma(1+\sqrt{\delta})}{\lambda_2}-\sqrt{\delta}c\lambda_n-\frac{c\lambda_n}{2}\geq 0 \notag \\[2mm]
		&	\frac{c\hat{\lambda}_1}{2}-\frac{\eta\gamma^2}{2}-\frac{2\delta\lambda_n^2c\sigma}
		{(1-\sqrt{\delta})\lambda_2}-\frac{4c^2\sigma\hat{\lambda}_n^2+4\sigma\gamma^2}{c\lambda_2},  \notag  \\
		&-\frac{(1+\sigma)r\delta}{1-\sqrt{\delta}}-\frac{\delta c \lambda_n}{1-\sqrt{\delta}}\geq 0 
		\label{eq:c-delta-range}
	\end{align}
	Since $\sigma>0$ can be arbitrary small,  to satisfied  \eqref{eq:c-delta-range}, we  need
	\begin{align}\label{eq:unbiased-pre-c}
		&r(1-\sqrt{\delta})\geq \sqrt{\delta}c\lambda_n+\frac{c\lambda_n}{2}\Rightarrow r\geq\frac{\sqrt{\delta}c\lambda_n+\frac{c\lambda_n}{2}}{1-\sqrt{\delta}}, \notag \\
		&	v-\frac{1}{2\eta}\geq 0 \Rightarrow \eta \geq \frac{1}{2v}, \notag \\
		 & \frac{c\hat{\lambda_1}}{2}-\frac{\eta\gamma^2}{2}-\frac{r\delta}{1-\sqrt{\delta}}-\frac{\delta c \lambda_n}{1-\sqrt{\delta}}\geq 0.
	\end{align}
	Rearrange \eqref{eq:unbiased-pre-c}, we can get
	\begin{align}
		c\big(\frac{\hat{\lambda}_1}{2}-\frac{3\lambda_n}{2}\frac{\delta}{(1-\sqrt{\delta})^2}\big)>\frac{\gamma^2}{4v}.
	\end{align}
	Since $c>0$ and $\gamma=2\ell$, to make sure the existence of $c$, let  $\frac{\hat{\lambda}_1}{2}-\frac{3\lambda_n}{2}\frac{\delta}{(1-\sqrt{\delta})^2}>0$, then we can obtain
	\begin{align} \label{eq:unbiased-c-reqirement}
		c>\frac{\ell^2}{v}\frac{2(1-\sqrt{\delta})^2}{\hat{\lambda}_1(1-\sqrt{\delta})^2-3\lambda_n\delta}
	\end{align}
	When $\delta=0$, \eqref{eq:unbiased-c-reqirement} becomes
	$c>\frac{2\ell^2}{v\hat{\lambda}_1}$, 
	which is the requirement of the penalty parameter c in DQM \cite{TSP-Mokhtari-Shi-etc2016}. 
\end{proof}

\end{document}